\newcommand{\ndiv}{{\hspace{-2pt}\not|\hspace{1pt}}}
\newtheorem{theorem}{Theorem}[section]
\newtheorem{lemma}[theorem]{Lemma}
\newtheorem{proposition}[theorem]{Proposition}
\theoremstyle{definition}
\declaretheorem[name=Example,qed={\lower-0.3ex\hbox{$\triangleleft$}},sibling=theorem]{example}
\newtheorem{remark}[theorem]{Remark}
\newtheorem{question}[theorem]{Question}
\newcommand{\N}{\mathbb N}
\newcommand{\Z}{\mathbb Z}
\newcommand{\Q}{\mathbb Q}
\newcommand{\R}{\mathbb R}
\newcommand{\w}{\omega}
\newcommand{\s}{\sigma}
\newcommand{\Cs}{\mathscr C}
\newcommand{\clorbit}[2]{\overline{\mathcal{O}}_{#2}\left(#1\right)}
\newcommand{\per}[1]{\textrm{\upshape Per}\left(#1\right)}
\newcommand{\notimplies}{\centernot\implies}
\newcommand{\get}[1]{\xleftarrow{\: #1 \:}}
\newcommand{\sg}[2]{#1^{(#2)}}
\newcommand{\inv}[2]{\varprojlim \left(#1,#2\right)}
\begin{document}

\title{Almost Minimal Systems and Periodicity in Hyperspaces}

\author{Leobardo Fern\'andez}
\address{School of Mathematics; University of Birmingham;
Birmingham, B15 2TT, UK}
\email{l.fernandez@bham.ac.uk, leobardof@ciencias.unam.mx}
\thanks{The first author acknowledges support by CONACyT,
scholarship for Postdoctoral Position no.~250254}

\author{Chris Good}
\address{School of Mathematics; University of Birmingham;
Birmingham, B15 2TT, UK}
\email{c.good@bham.ac.uk}
\thanks{The second and the third author acknowledge support from the European Union through
funding under FP7-ICT-2011-8 project HIERATIC (316705)}

\author{Mate Puljiz}
\address{School of Mathematics; University of Birmingham;
Birmingham, B15 2TT, UK}
\email{mxp243@student.bham.ac.uk, puljizmate@gmail.com}

\subjclass[2010]{Primary 37E15; Secondary 37B05, 37B10, 54H20}
\keywords{Induced map; hyperspace; periodic points; almost minimal system; {\v{S}}arkovs{\cprime}ki{\u\i}'s Order.}

\begin{abstract}
Given a self-map of a compact metric space $X$, we study periodic points of the map induced on the hyperspace of closed subsets of $X$. We give some necessary conditions on admissible sets of periods for these maps. Seemingly unrelated to this, we construct an almost totally minimal homeomorphism of the Cantor set. We also apply our theory to give a full description of admissible period sets for induced maps of the interval maps. The description of admissible periods is also given for maps induced on symmetric products.
\end{abstract}

\maketitle

\section{Introduction}

A continuous function $f\colon X\to X$ from a compact metric space $X$ to itself induces a continuous function $2^f\colon 2^X \to 2^X$ on the (compact metric) space of closed subsets of $X$ with with Hausdorff metric by defining $2^f(C)=f(C)=\{f(x):x\in C\}$ for all closed $C\in 2^X$.  The study of such systems was initiated by Bauer and Sigmund \cite{Bauer}, where they can been seen as a (simplified) topological version of the induced dynamics on the space of measures over $X$ and their evolution under $f$.  Edalat \cite{Edalat} has argued that such systems are the natural approach to dynamical systems, from a computational and domain theoretic point of view.

Clearly one is interested in which properties of the dynamical system $(X,f)$ transfer to the system $(2^X, 2^f)$ and there have been a number of results in this direction.  For example, Banks \cite{Banks} proves that $f$ is (weakly) mixing if and only if $2^f$ is (weakly) mixing and that $2^f$ is transitive if and only of $f$ is weakly mixing.  In \cite{Bauer} and also \cite{TopEntropyForInduced} it is shown that if $f$ has positive entropy, then the entropy of $2^f$ is infinite. Kwietniak and Oprocha go on to show \cite{TopEntropyForInduced} that if $f$ does not have a dense set of recurrent points, then the entropy of $2^f$ is at least $\log 2$. In \cite{ChaosHyp} all the usual notions of chaos except for the classical Devaney's definition is transferred from $f$ to $2^f$, whereas the converse does not hold for any of them. Recently, the authors jointly with Ram{\'{\i}}rez in \cite{ChainTransitivity} have proved that $2^f$ is {chain transitive} if and only if $f$ is {chain weakly mixing} if and only if $f$ is chain transitive and has a fixed point. The first two authors \cite{Shadowing} also show that $2^f$ has shadowing if and only if $f$ does.

However, little work has been done on the periodic points of $2^f$ other than some results on periodicity in symmetric products in  \cite{DynPropOfInduced} which we extend in Section \ref{sec:per}.  It turns out that the theory here is of some interest. Moreover, the techniques that seem to be necessary to construct hyperspaces with various periods are of independent interest. 

Example \ref{ex:cylinder} shows that the set of periods of $2^f$ can be $\N$ even when $f$ has no periodic points at all and in Theorem \ref{tm:interval} we show quite easily that if $f$ is an interval map, then the set of periods of $2^f$ is either $\{1\}$ or $\{1,2\}$ or $\N$.

Trivially, if $x$ is a point if period $n$ under $f$, then $\{x\}$ is a point of period $n$ under $2^f$. Since finite unions of points are again closed sets, it follows, for example, that if $f$ has a period 4 point and a period 6 point, then $2^f$ has points of period 1, 2, 3, 4, 6 and 12. It is possible to give a complete description of admissible periods in $2^f$ that arise in this fashion, i.e.\ those that are formed exclusively of points that were periodic in the original system $(X,f)$, and this is done in Section \ref{sec:per}. More generally if $2^f$ has points of period $4$ and $6$ consisting of pairwise disjoint subsets of $X$, then it would again have points of period 1, 2, 3, 4, 6 and 12. It is reasonable to ask then, whether this is a general property of induced maps on hyperspaces. 

It turns out that this is not the case.  In Section \ref{sec:236} we construct a dynamical system $(Z,g)$ on the Cantor set such that the induced system $(2^Z,2^g)$ only has periodic points of periods $1$, $2$ and $3$. In particular it does not have a period $6$ point. In Theorem \ref{tm:pq} we generalise this construction to an induced system which admits only those periods that are divisors\footnote{Here and elsewhere in text by divisors we actually mean \emph{positive} divisors.} of two positive integers $p$ and $q$.  

To obtain those results we first construct an \emph{almost totally minimal system} over the Cantor set which is of interest in its own right, see Theorem \ref{tm:atm}. Recall that a system $(X,f)$ having a fixed point $x^0\in X$ is almost totally minimal if after removing its (unique) fixed point the remaining non-compact system $(X^*=X\setminus\{x^0\},f)$ is totally minimal, meaning that the full orbit with respect to any iterate of $f$ of any point is dense in $X^*$ (see Section \ref{sec:atm} for a precise definition). This is done via \emph{graph covers}, a tool first devised by Gambaudo and Martens in \cite{GraphCovers} to give a combinatorial description of minimal systems over the Cantor set. This theory has proved useful not just for describing algebraic structure of such systems but also for constructing maps on the Cantor set with particular properties. Shimomura \cite{ShimomuraScrambled}, for example, uses it to construct a transitive, completely scrambled $0$-dimensional system.
There is a close link between these graph covers,  Bratteli-Vershik diagrams and Kakutani-Rokhlin towers and it would be possible to pass from one representation to another. As an example, in the Appendix we give a Bratteli-Vershik representation of that almost totally minimal system.

\medskip

The rest of the paper is organised as follows. In Section \ref{sec:prelim} we briefly introduce the notation. A complete description of the situation for interval maps is given in Section \ref{sec:interval}. In Section \ref{sec:elementary} we introduce minimal building blocks of periodic sets in hyperspaces and derive some fundamental properties related to these. In Section \ref{sec:admis} we seek to characterise admissible periods for induced maps. In Section \ref{sec:atm} an almost totally minimal homeomorphism of the Cantor set is constructed, which we then use in Section \ref{sec:236} to obtain a map whose induced system admits only periods $1$, $2$ and $3$. Section \ref{sec:per} contains results on periods in symmetric products. At the very end in Section \ref{sec:concluding} we list some problems naturally arising from our considerations that are still without a satisfying resolution.

\section{Preliminaries}\label{sec:prelim}

All the spaces under consideration will be compact and metric unless specified otherwise. Given a continuous self-map $f\colon X \to X$ of such a space the inverse limit $\inv{X}{f}$ is defined as the set of full orbits of this system
\begin{equation*}
\inv{X}{f} = \left\{ \bar{x}=(x_i)_{i\in\Z}=(\dots,x_{-1},x_0,x_1,x_2,\dots) \in \prod\limits_{i\in\Z}X \mid f(x_i)=x_{i+1}\right\}.
\end{equation*}
Note the slightly unconventional enumeration of indices above. We shall say that any $\bar{x}\in\inv{X}{f}$ with $x_0=x$ is a \emph{full orbit} of the point $x\in X$. There exists a natural homeomorphism of $\inv{X}{f}$ called (right) shift $\s\colon \inv{X}{f}\to \inv{X}{f}$ given by $(\s (\bar{x}))_i=x_{i+1}$, for all $i \in\Z$. For more on inverse limits see \cite{InvLim}.

Recall that to a full orbit $\bar{x}\in\inv{X}{f}$ once can associate two limit sets, $\w$- and $\alpha$-limit set which are the accumulation sets of the forward and backward orbit of $x_0$ respectively.
\begin{gather*}
\w(\bar{x})=\bigcap_{m=0}^\infty\overline{\bigcup_{n=m}^\infty \{x_{n}\} },\\
\alpha(\bar{x}) = \bigcap_{m=0}^\infty\overline{\bigcup_{n=m}^\infty \{x_{-n}\} }.
\end{gather*}
Both of these are closed and strongly $f$-invariant meaning that $f(\w(\bar{x}))=\w(\bar{x})$ and $f(\alpha(\bar{x}))=\alpha(\bar{x})$, see e.g.\ \cites{BlockCoppel, alpha}. Note that we could equivalently say that they are fixed points of the induced map $2^f$ (see below). As a shorthand we denote their union $\w(\bar{x}) \cup \alpha(\bar{x})$ by $\lambda(\bar{x})$ which is again a closed and strongly $f$-invariant set.

\medskip

The set of all closed (and compact) subsets of $X$ is denoted by $2^X$ and can be metrized using \emph{Hausdorff metric}
\begin{gather*}
H(A,B) = \inf \{ \varepsilon > 0 \mid A \subseteq N_{X}(B, \varepsilon)
\text{ and } B \subseteq N_{X}(A, \varepsilon) \},\\
\text{where\qquad} N_{X}(S, \varepsilon)=\{ x \in X \mid (\exists y\in S)\ d(x,y)<\epsilon \},
\end{gather*}
with the corresponding induced topology coinciding with \emph{Vietoris' topology} (see e.g.\ \cite{HypSpaces} for more details).

The induced map $2^f\colon 2^X\to 2^X$ on the hyperspace $2^X$ is defined by
\begin{equation*}
2^f(S)=f(S)=\{f(s) \mid s\in S \}
\end{equation*}
and is continuous with respect to Hausdorff metric.

\medskip

As we shall constantly be dealing with the sets of periods of different functions it is convenient to introduce a symbol $\per{f}$ for the subset of natural numbers such that $k\in\per{f}$ if and only if there exists a point $x\in X$ with the fundamental period $k$. $\per{2^f}$ will consequentially be the set of all the fundamental periods of points in $2^X$.

We shall also be interested in restrictions of the map $2^f$ to a couple of $2^f$-invariant subsets of $2^X$. We introduce a special symbol for both of these restrictions:
\begin{gather*}
f_n = 2^f|_{F_{n}(X)},\\
f^{<\w} = 2^f|_{F(X)},
\end{gather*}
where $F_{n}(X) = \{ A \in 2^{X} \mid A \hbox{ has at most } n \text{ points} \}$ is the \emph{$n$-fold symmetric product of $X$} and $F(X) = \bigcup_{n = 1}^{\infty} F_{n}(X)$ is the collection of all finite subsets of $X$. Occasionally we shall write just $f$ for any of the above maps (including $2^f$ itself) as this does not lead to any confusion, and is useful to keep the notation simple, especially when we need to refer to the $k$\textsuperscript{th} iterate of the map $2^f$ which we simply denote by $f^k$.

The usual \emph{$n$-fold Cartesian product} will also be of interest as the $n$-fold symmetric product can be seen as a quotient of this space. Somewhat unconventionally we denote the product space $\underbrace{X \times X \times\cdots \times X}_{n-times}$ by $X^{(n)}$ and the induced map by $f^{(n)}$. This is not to be confused with $f^n$ which is simply the $n$\textsuperscript{th} iterate of $f$.

\section{Two Simple Results}\label{sec:interval}

To put ours result into perspective, we start with two related results. 

\begin{example}\label{ex:cylinder}
Let $C=\mathbb{S}^1\times[0,1]$ be a cylinder where $\mathbb{S}^1=[0,1]/_\sim$ denotes the unit circle obtained from the interval $[0,1]$ with its endpoints identified. Let $g\colon C\to C$ be an irrational rotation by $\alpha\in\R\setminus\Q$ about the central axis combined with an upward displacement that preserves the bases, e.g.\ $(\theta,z)\stackrel{g}{\mapsto} (\theta+\alpha \bmod{1},2z-z^2)$. Note that this is a homeomorphism of $C$. Let $X\subset C$ be the set consisting of the two bases $\mathbb{S}^1\times\{0,1\}$ and a full orbit $\{\dots,z_{-1},z_{0},z_{1},z_{2},\dots\}$ of a point $z_0$ on a generating line, say $z_0=(0,1/2)$, where we set $z_k=g^k(z_0)$ for all $k\in\Z\setminus\{0\}$ (see Figure \ref{fig:cyl}). Then clearly $f=g|_X$ has no periodic points and at the same time, for any $k\in \N$ the set $\mathbb{S}^1\times\{0,1\}\cup\{\dots,z_{-k},z_0,z_k,\dots\}$ is periodic under $2^f$ with period $k$.
\end{example}
\begin{figure}
\begin{tikzpicture}[x={(1,0)},z={(0,1)},y={(-.3,-.2)},scale=1.4]
\draw[thick] (1,0,1)
\foreach \z in {0,.1,...,10}
{ -- ({cos(\z*100)},{sin(\z*100)},{1})
};
\draw[thick] (1,0,-1)
\foreach \z in {0,.1,...,10}
{ -- ({cos(\z*100)},{sin(\z*100)},{-1})
};
\draw[dashed] ({cos(-100*6)}, {-sin(-100*6)}, {0.94*sin(-100)})
\foreach \z in {-100,-99.9,...,100}
{ -- ({cos(\z*6)}, {-sin(\z*6)}, {0.94*sin(\z)})
};
\foreach \k in {-99,-90,...,90}
{
\node at ({cos(\k*6)}, {-sin(\k*6)}, {0.94*sin(\k)}) {\tiny\textbullet};
};
\node at ({cos(0*6)}, {-sin(0*6)}, {0.94*sin(0)}) [right]{$z_0$};

\node at ({cos(9*6)}, {-sin(9*6)}, {0.94*sin(9)}) [right]{$z_1$};

\node at ({cos(18*6)}, {-sin(18*6)}, {0.94*sin(18)}) [above]{$z_2$};

\node at ({cos(27*6)}, {-sin(27*6)}, {0.94*sin(27)}) [above]{$z_{3}$};

\node at ({cos(-9*6)}, {-sin(-9*6)}, {0.94*sin(-9)}) [above]{$z_{-1}$};

\node at ({cos(-18*6)}, {-sin(-18*6)}, {0.94*sin(-18)}) [below]{$z_{-2}$};
\end{tikzpicture}
\caption{The set $X$ from Example \ref{ex:cylinder}.}
\label{fig:cyl}
\end{figure}
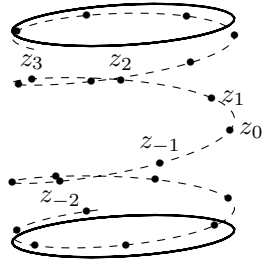

Block and Coven \cite{IntervalRecurrence} prove that if 
 $f$ is be a continuous map from a compact interval to itself and every point is chain recurrent, then either $f^2$ is the identity map or $f$ is turbulent. From this, one can easily deduce the following.

\begin{theorem}\label{tm:interval}
Let $f$ be a continuous map of a compact interval to itself. Then $\per{2^f}$ is either $\{1\}$ or $\{1,2\}$ or $\N$.
\end{theorem}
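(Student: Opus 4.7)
I plan to prove the equivalent dichotomy: if $\per{2^f}$ contains some $n \geq 3$, then $\per{2^f} = \N$. Every continuous self-map of a compact interval has a fixed point, so $1 \in \per{2^f}$ always; combined with the dichotomy this gives the three stated possibilities.

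Suppose $A$ has fundamental period $n \geq 3$ under $2^f$. Then $B := \bigcup_{i=0}^{n-1} f^i(A)$ satisfies $f(B) = B$, and the convex hull $J = [\min B, \max B]$ satisfies $f(J) \supseteq J$ by continuity and connectedness of $f(J)$. Iterating, $J^* := \bigcup_{k \geq 0} f^k(J)$ is a closed $f$-invariant subinterval of $I$ containing $B$; restricting $f$, I may assume $f \colon J^* \to J^*$. The next step is to collapse each maximal wandering open subinterval $W \subseteq J^*$ (those with $f^k(W) \cap W = \emptyset$ for all $k \geq 1$) to a point, yielding a continuous factor map $\bar f$ of a compact subinterval $\bar J^*$ whose entire domain is chain recurrent.

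Block and Coven's theorem then applies to $\bar f$: either $\bar f^{\,2} = \mathrm{id}_{\bar J^*}$ or $\bar f$ is turbulent. In the first case $(2^{\bar f})^2 = \mathrm{id}$, so the image of $A$ in $2^{\bar J^*}$ would have period dividing $2$, contradicting $n \geq 3$. Hence $\bar f$ is turbulent, and so every positive integer appears as a period of $\bar f$. Each period-$k$ orbit of $\bar f$ pulls back either to a period-$k$ orbit of $f$ in $J^*$ or to a cycle of $k$ wandering intervals of $f$ permuted cyclically; in both cases one obtains a period-$k$ element of $\per{2^f}$, giving $\per{2^f} = \N$.

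\textbf{Main obstacle.} The main technical step is the wandering-interval collapse. One needs to verify that the quotient is a genuine interval self-map with trivial chain-recurrent complement, that the period-$n$ structure of $A$ persists (either by descending to a period-$n$ set $\bar A$ of $2^{\bar f}$ or by $A$ itself being composed of cycles of wandering intervals that already produce the required period under $2^f$), and that periodic orbits of $\bar f$ lift to periodic sets of $2^f$. The last point is essential because for $f$ with, say, $\per{f} = \{1, 2, 4\}$, the period-$3$ element of $\per{2^f}$ guaranteed by the theorem cannot be a period-$3$ orbit of $f$ and must come from a cycle of three wandering intervals.
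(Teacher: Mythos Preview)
There is a genuine gap, and it sits exactly where you flag it. A ``cycle of $k$ wandering intervals permuted cyclically'' cannot produce a period-$k$ point of $2^f$: if $\overline W$ is a nondegenerate closed interval with $f^k(\overline W)=\overline W$ while the open interior $W$ is wandering, then $f^k(W)\cap W=\emptyset$ together with $f^k(W)\subseteq f^k(\overline W)=\overline W$ forces $f^k(W)\subseteq\partial W$, so $\overline W=f^k(\overline W)\subseteq\partial W\cup f^k(\partial W)$ is finite --- a contradiction. Thus periodic points of $\bar f$ lying at collapsed intervals do not lift to periodic closed sets of $f$ in the way you describe, and the same objection breaks the alternative you offer for preserving the period of $A$ through the quotient. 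There is also trouble one step earlier: maximal wandering intervals need not exist. For $f(x)=x^2$ on $[0,1]$, an interval $(a,b)\subset(0,1)$ is wandering iff $b^2\le a$; among those containing $\tfrac12$ there is no largest one (one may enlarge $b$ only at the cost of enlarging $a$), and their union is all of $(0,1)$, so the collapse is not well-defined.

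The paper avoids these problems by reversing the logic: rather than quotienting away non-chain-recurrent behaviour and then lifting periods back, it exploits that behaviour directly. If some $x_0$ in the surjective core is not chain recurrent, one checks that $x_0\notin\lambda(\bar x)$ for a suitable full orbit $\bar x$, and then Proposition~\ref{prop:nonrecurrent} gives $\per{2^f}=\N$ at once: for each $k$ the set $\overline{\{x_{mk}:m\in\Z\}}$ has fundamental period $k$. Block--Coven is invoked only in the complementary case where every point \emph{is} chain recurrent, and there it applies to $f$ itself with no collapse needed. In your $\per f=\{1,2,4\}$ example, the required period-$3$ closed set is precisely such an $\overline{\{x_{3m}:m\in\Z\}}$, not a cycle of intervals.
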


\begin{proof} If $f^2$ is the identity, then $\per{2^f}$ is either $\{1\}$ or $\{1,2\}$. If $f^2$ turbulent, then $f$ must have a periodic point with period which is not a power of $2$ (see p33 in \cite{BlockCoppel}).  But if $f$ has a point of period $2^n(2k+1)$, then by {\v{S}}arkovs{\cprime}ki{\u\i}'s Theorem, it has points of period $2^{n+1}m$ for any positive integer $m$, from which it follows that $2^f$ has points of period $m$, i.e. that $\per{2^f}=\N$. 

So suppose that there exists a point $x_0$ that is not chain recurrent.\footnote{Compare this with Proposition \ref{prop:nonrecurrent}. Also note that whether a point is \emph{(chain) recurrent} depends only on its forward orbit, as is common in the literature; but, for convenience, our notion of \emph{non-recurrent} point involves the full $\lambda$-limit set of the point. As a results, it is possible for a point to be neither recurrent nor non-recurrent.} Clearly, $x_0$ is not periodic. Also, we may assume that $f$ is onto, as otherwise one can take a restriction to the surjective core of the map $f$ and repeat the same reasoning. We can therefore take a full orbit $\bar{x}\in\inv{X}{f}$ of $x_0$ and consider the limit set
\begin{equation*}
\lambda(\bar{x})=\alpha(\bar{x})\cup\omega(\bar{x})=\bigcap_{m=0}^\infty\overline{\bigcup_{n=m}^\infty \{x_{-n},x_{n}\} }.
\end{equation*}
We claim that $x_0$ is non-recurrent, i.e.\ $x_0\notin \lambda(\bar{x})$.
Firstly note that $x_0\notin\w(x_0)$ as otherwise it would be recurrent and hence chain recurrent. Other possibility is that there exists an increasing subsequence $(p_k)_{k\in\N}$ such that $x_{-p_k}\to x_0$ as $k\to\infty$. But then $x_{-p_k+1}\to x_1$ and for any $\epsilon>0$ one can choose $k_0\in\N$ large enough so that $x_{-p_{k_0}+1}$ is $\epsilon$ close to $x_1$. Thus, $\{x_0, x_{-p_{k_0}+1}, x_{-p_{k_0}+2},\dots,x_0\}$ is an $\epsilon$-chain from $x_0$ to $x_0$ making $x_0$ a chain recurrent point. A contradiction. 

As $x_0$ is non-recurrent, by Proposition \ref{prop:nonrecurrent}, we immediately obtain all periods in $\per{2^f}$ proving our corollary.
\end{proof}

\section{Elementary periodic points}\label{sec:elementary}
We shall first describe the most basic type of periodic points that appear in the induced dynamics on $2^X$. It captures both the periods arising from cycles in $(X,f)$ via $x\mapsto\{x\}$ embedding as well as the periods as in Example \ref{ex:cylinder}.

Given a point $\bar{x}=(\dots,x_{-2},x_{-1},x_0,x_1,x_2,\dots)\in\inv{X}{f}$ we define the \emph{set of periods of $\bar{x}$} by
\begin{equation*}
\per{\bar{x}}=\{k\in\N \mid \overline{\{x_{mk} \mid m\in\Z\}}\in 2^X \text{ is periodic with period } k \}.
\end{equation*}
We use the same symbol as before but the meaning will be clear from the context. Note that this set does not depend on the choice of the starting point, i.e.\ $\per{\bar{x}}=\per{\s(\bar{x})}=\per{\s^{-1}(\bar{x})}$. It may however depend on the chosen backward orbit of $x_0$. For example, if $x_0$ is a fixed point which also has a history of infinitely many isolated points then $\per{(\dots, x_{-2}, x_{-1}, x_0,x_0,\dots)}=\N$ but $\per{(\dots,x_0,x_0,x_0,\dots)}=\{1\}$.

\begin{remark}
If $\{x_0,x_1,\dots,x_{p-1} \}$ is a $p$-cycle in $(X,f)$ then it is not hard to check that $\per{(\dots,x_{p-1},x_0,\dots,x_{p-1},x_0,\dots)}$ is the set of all divisors of $p$.
\end{remark}

The situation akin to that in Example \ref{ex:cylinder} occurs whenever there exists a \emph{non-recurrent point}, i.e.\ a point whose full orbit does not accumulate at the point itself. Formally $x\in X$ is \emph{non-recurrent} if there exists a full orbit $\bar{x}\in \inv{X}{f}$ of $x=x_0$ such that $x\notin \lambda(\bar{x})$.

\begin{proposition}\label{prop:nonrecurrent}
Let $f\colon X\to X$ be a continuous map and assume that there exists a non-recurrent point $x_0$ with a full orbit $\bar{x}=(\dots,x_{-1},x_0,x_1,\dots)\in \inv{X}{f}$. Then $\per{\bar{x}}=\N$ and therefore also $\per{2^f}=\N$.
\end{proposition}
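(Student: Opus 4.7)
The plan is to show that, for every $k\in\N$, the set
\[
A_k := \overline{\{x_{mk} : m \in \Z\}}
\]
is periodic under $2^f$ with fundamental period exactly $k$. The equality $f^k(A_k)=A_k$ is immediate: since $X$ is compact and $f^k$ is continuous, $f^k$ commutes with closures, and since $m\mapsto m+1$ permutes $\Z$, we get $f^k(A_k) = \overline{\{x_{(m+1)k} : m\in\Z\}} = A_k$. So the fundamental period of $A_k$ divides $k$, and the work lies in ruling out $f^j(A_k)=A_k$ for $1\leq j<k$.

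Before doing so, I would record two consequences of non-recurrence. First, $x_0$ is not periodic, for otherwise $x_0\in\omega(\bar{x})\subseteq\lambda(\bar{x})$; in fact, any coincidence $x_{n_1}=x_{n_2}$ with $n_1<n_2$ forces $f^{n_2-n_1}(x_0)=x_0$ via the shift relation $x_{i+1}=f(x_i)$, so $x_0=x_n$ can hold only when $n=0$. Second, any limit point of the set $\{x_{mk}:m\in\Z\}$ lies in $\lambda(\bar{x})$: a convergent sequence of distinct points $x_{m_n k}\to y$ forces $|m_n|\to\infty$ along a subsequence, placing $y$ in $\omega(\bar{x})$ or $\alpha(\bar{x})$.

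Now suppose $f^j(A_k)=A_k$ for some $1\leq j<k$. Since $x_0\in A_k$, there must exist $y\in A_k$ with $f^j(y)=x_0$. The point $y$ is either of the form $x_{mk}$ for some $m\in\Z$ or a limit point of $\{x_{mk}:m\in\Z\}$. In the first case $x_0=f^j(x_{mk})=x_{mk+j}$, so by the first observation $mk+j=0$, which is impossible for $m\in\Z$ and $1\leq j<k$. In the second case $y\in\lambda(\bar{x})$, and since $\lambda(\bar{x})$ is strongly $f$-invariant, $x_0=f^j(y)\in\lambda(\bar{x})$, contradicting non-recurrence. Hence $k\in\per{\bar{x}}$ for every $k$, and consequently $\per{2^f}=\N$. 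The main subtlety is recognising that the hypothesis must be applied through a preimage of $x_0$ (forward iteration of $x_0$ gives no contradiction, since $f^j(x_0)=x_j$ may well lie in $A_k$); once framed that way, the two possibilities for $y$ are cleanly separated by whether it belongs to the discrete orbit set or to $\lambda(\bar{x})$.
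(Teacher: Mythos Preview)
Your proof is correct and follows essentially the same line as the paper's: both exploit that $x_0\in A_k$ while any appearance of $x_0$ in a nontrivial iterate would force either $x_0=x_\ell$ for some $\ell\neq 0$ (ruled out since $x_0$ is not periodic) or $x_0\in\lambda(\bar x)$ (ruled out by non-recurrence). The paper argues in the forward direction, showing $x_0\notin f^n(A_k)=\overline{\{x_{mk+n}:m\in\Z\}}$ directly, whereas you pull $x_0$ back to a preimage $y\in A_k$; these are dual formulations of the same dichotomy. One small slip worth fixing: the general assertion that \emph{any} coincidence $x_{n_1}=x_{n_2}$ forces $f^{n_2-n_1}(x_0)=x_0$ is false when $n_1>0$ and $f$ is non-injective (consider a non-periodic point whose forward orbit falls into a fixed point), but you only ever use the special case $x_0=x_n$, for which the conclusion is immediate in either sign of $n$.
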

\begin{proof}
Let $k\in\N$ be arbitrary and set $S=\overline{\{x_{mk} \mid m\in\Z\}}\in 2^X$. $S$ is clearly mapped to itself under $k$ iterates of $2^f$. It is therefore periodic with a fundamental period that divides $k$. To show that this is in fact $k$ it suffices to see that $x_0\notin (2^f)^n (S)$ for any $0<n<k$.

To that end take any such $n$ and note that $(2^f)^n (S)=\overline{\{x_{mk+n} \mid m\in\Z\}}$. Recall that $x_0\notin\lambda(\bar{x})$ and so if $x_0$ was in $(2^f)^n(S)$ it could only be equal to $x_{m_0 k+n}$ for some $m_0\in\Z$ and thus $x_0$ would have to be periodic. This would then imply that $x_0\in\w(\bar{x})\subseteq \lambda(\bar{x})$ which gives a contradiction.
\end{proof}
\begin{remark}
It could however happen that $x$ itself is recurrent but if it has a point $x_{-k}$ for some $k>0$ in the backward part of its orbit that is non-recurrent then the result still holds as $\per{\bar{x}}=\per{\s^{-k}(\bar{x})}=\N$.
\end{remark}

We have already used the obvious fact that $\per{\bar{x}}\subseteq \per{2^f}$ and one might suspect that $\bigcup_{\bar{x}\in\inv{X}{f}}\per{\bar{x}} = \per{2^f}$, but this is not the case. It suffices to take a system that consists of five points, two of which form a 2-cycle and the other three a 3-cycle. Then clearly $6\in \per{2^f}$ but no point in the inverse limit has 6 in its set of periods.

\medskip

$\per{\bar{x}}$, however, has a nice structure. Below we show that it is closed under taking divisors and least common multiples. This in particular implies that in case it is finite, $\per{\bar{x}}$ is simply the set of divisors of its largest element. But what if it is infinite, does it have to be $\N$? A negative answer to this provides the odometer, a classical example of a transitive system over the Cantor set.

\begin{example}
Let $X=\Sigma_2=\{0,1\}^{\N}$. The $2$-adic odometer $f\colon X \to X$ is defined recursively by
\begin{equation*}
f(\xi_0,\xi_1,\dots)=\begin{cases}
(1,\xi_1,\dots), \text{ if $\xi_0=0$,}\\
(0,f(\xi_1,\xi_2,\dots)), \text{ otherwise.}
\end{cases}
\end{equation*}
It is not hard to see that $\per{\bar{x}}=\{1,2,2^2,2^3,\dots\}$ is the set of all powers of $2$ for any $\bar{x}\in\inv{X}{f}$, and that $\per{2^f}$ is the same set.

We note in passing that this is also an example of a system where $2^f$-periodic points are dense in $2^X$. This is simply because any closed set in $\Sigma_2$ can be approximated by a finite union of clopen cylinders to an arbitrary precision and these cylinders are clearly periodic under $2^f$ and so is their finite union. We remind the reader that a cylinder in $\Sigma_2$ is a set of the form
\begin{equation*}
\{ (\xi_0,\xi_1,\dots)\in\Sigma_2 \mid \xi_i = a_i \text{ for } 0\le i \le n\},
\end{equation*}
for some $n\in\N$ and a choice of $a_i\in\{0,1\}$, and that all such sets form a clopen basis of the Cantor topology on $\Sigma_2$.
\end{example}

To prove the aforementioned structural result for elementary sets of periods, we shall need two lemmata.

\begin{lemma}\label{lm:containment}
If $S\in 2^X$ is a $k$-periodic then none of its $k-1$ iterates under $2^f$ can be a subset of $S$.
\end{lemma}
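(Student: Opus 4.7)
The plan is to argue by contradiction. Suppose that for some $0 < j < k$ we have $(2^f)^j(S) \subseteq S$, i.e.\ $f^j(S) \subseteq S$. Applying $f^j$ to both sides and using that $2^f$ is monotone with respect to inclusion, we obtain the decreasing chain
\begin{equation*}
S \supseteq f^j(S) \supseteq f^{2j}(S) \supseteq \cdots \supseteq f^{kj}(S).
\end{equation*}

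The key observation is now to close the loop using the assumed $k$-periodicity: since $f^k(S) = S$, we have $f^{kj}(S) = (f^k)^j(S) = S$. Plugging this back into the chain forces every containment to be an equality, and in particular $f^j(S) = S$.

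This contradicts the hypothesis that $k$ is the \emph{fundamental} period of $S$, since $0 < j < k$ would then be a strictly smaller period. The main point to be careful about is the distinction between $f^j$ meaning the $j$-th iterate of the induced map $2^f$ versus the pointwise map (which the notational convention introduced in Section \ref{sec:prelim} handles), and noting that the argument uses nothing beyond the inclusion-monotonicity of $2^f$ together with the periodicity relation $f^k(S) = S$; there is no real obstacle.
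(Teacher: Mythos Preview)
Your proof is correct and essentially identical to the paper's own argument: assume $f^j(S)\subseteq S$ for some $0<j<k$, iterate to obtain the chain of inclusions, and then use $f^{kj}(S)=S$ to collapse them all into equalities, contradicting that $k$ is the fundamental period. The only cosmetic difference is that the paper writes the chain in the ascending direction $S=f^{kj}(S)\subseteq f^{(k-1)j}(S)\subseteq\cdots\subseteq f^j(S)\subseteq S$, but this is the same argument.
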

\begin{proof}
Otherwise, let us assume that $(2^f)^{j}(S)\subseteq S$ for some $0<j<k$. Using the usual notation one would write $f^{j}(S)\subseteq S$ and from there
\begin{equation*}
S=f^{kj}(S)\subseteq f^{(k-1)j}(S)\subseteq \dots \subseteq f^{j}(S)\subseteq S.
\end{equation*}
This makes $S$ periodic with a period strictly less than $k$, a contradiction.
\end{proof}

\begin{lemma}\label{lm:percharacterisation}
Let $\bar{x}\in\inv{X}{f}$. Then
\begin{equation}\label{eq:el}
\per{\bar{x}}=\{ k \in \N \mid \exists N_0\in\N \text{ s.t. } x_{-N_0k} \not\in \overline{\{x_l \mid l\in\Z \text{ and } k\ndiv l\}} \}.
\end{equation}
Also
\begin{equation}\label{eq:el2}
\per{\bar{x}}=\{ k \in \N \mid \exists N_0\in\N \text{ s.t. } \forall N\ge N_0 \quad x_{-Nk} \not\in \overline{\{x_l \mid l\in\Z \text{ and } k\ndiv l\}} \}
\end{equation}
and in particular
\begin{equation}\label{eq:union}
\per{\bar{x}}=\bigcup_{N=0}^\infty \{ k \in \N \mid  x_{-Nk} \not\in \overline{\{x_l \mid l\in\Z \text{ and } k\ndiv l\}} \}.
\end{equation}
\end{lemma}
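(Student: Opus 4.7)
The plan is to introduce the orbit--closure pieces $S_k := \overline{\{x_{mk} \mid m \in \Z\}}$ and $A_n := (2^f)^n(S_k) = \overline{\{x_{mk+n} \mid m \in \Z\}}$ for $0 \le n < k$, and to rewrite $T_k := \overline{\{x_l \mid l \in \Z,\ k \ndiv l\}}$ as the finite union $A_1 \cup \cdots \cup A_{k-1}$, using that closure commutes with finite unions. A short calculation shows that each $A_n$, and hence $T_k$, is strongly $f^k$-invariant. Since $(2^f)^k$ fixes $S_k$, the fundamental period of $S_k$ under $2^f$ divides $k$, so $k \in \per{\bar{x}}$ is equivalent to this period being exactly $k$.

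For the easy inclusion ``$\supseteq$'' in \eqref{eq:el} I argue the contrapositive: if the period of $S_k$ equals some proper divisor $d$ of $k$, then $S_k = A_d \subseteq T_k$, so every $x_{mk}$ lies in $T_k$ and no $N_0 \in \N$ can satisfy $x_{-N_0 k} \notin T_k$.

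For the harder inclusion ``$\subseteq$'' I again work by contrapositive, assuming $x_{-Nk} \in T_k$ for every $N \in \N$. Pigeonhole applied to the finite union $T_k = A_1 \cup \cdots \cup A_{k-1}$ produces some $n^* \in \{1, \ldots, k-1\}$ together with an infinite set $\mathcal{N} \subseteq \N$ with $x_{-Nk} \in A_{n^*}$ for all $N \in \mathcal{N}$. Forward $f^k$-invariance of $A_{n^*}$ then propagates this to the full orbit: for any $m \in \Z$, picking $N \in \mathcal{N}$ with $N \ge -m$ gives $x_{mk} = f^{(m+N)k}(x_{-Nk}) \in A_{n^*}$. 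Hence $\{x_{mk} \mid m \in \Z\} \subseteq A_{n^*}$, and closing up yields $S_k \subseteq A_{n^*}$. Applying $(2^f)^{k-n^*}$ to this inclusion and using $(2^f)^{k-n^*}(A_{n^*}) = (2^f)^k(S_k) = S_k$ produces $A_{k-n^*} \subseteq S_k$ with $0 < k-n^* < k$, which by Lemma~\ref{lm:containment} contradicts $S_k$ having period exactly $k$; so the period must be a proper divisor of $k$, and $k \notin \per{\bar{x}}$.

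For \eqref{eq:el2}: once some $x_{-N_0 k} \notin T_k$, the forward $f^k$-invariance $f^k(T_k) \subseteq T_k$ prevents any later $x_{-Nk}$ (with $N > N_0$) from lying in $T_k$, since otherwise $x_{-N_0 k} = f^{(N-N_0)k}(x_{-Nk}) \in T_k$. Equation \eqref{eq:union} is merely a rewriting of \eqref{eq:el}. The principal obstacle is the pigeonhole-plus-invariance step in the hard direction of \eqref{eq:el}: it is precisely what bridges the pointwise hypothesis ``every $x_{-Nk}$ is in $T_k$'' and the much stronger global statement ``$S_k \subseteq A_{n^*}$'' into a single piece of the cover.
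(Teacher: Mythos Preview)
Your proof is correct and follows essentially the same route as the paper's: both decompose $T_k$ as $A_1\cup\cdots\cup A_{k-1}$, use pigeonhole together with $f^k$-invariance of the $A_n$ to push from ``$x_{-Nk}\in T_k$ for all $N$'' to $S_k\subseteq A_{n^*}$, and then appeal to Lemma~\ref{lm:containment}. You are in fact slightly more explicit than the paper in one place---you apply $(2^f)^{k-n^*}$ to flip $S_k\subseteq A_{n^*}$ into $A_{k-n^*}\subseteq S_k$ so that Lemma~\ref{lm:containment} applies verbatim---whereas the paper invokes the lemma directly from $S_k\subseteq A_j$, leaving that reversal implicit.
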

\begin{proof}
\eqref{eq:el2} follows easily from \eqref{eq:el} if one recalls that $x_{-Nk}\not\in\overline{\{x_l \mid l\in\Z \text{ and } k\ndiv l\}}$ implies $x_{-(N+1)k}\not\in\overline{\{x_l \mid l\in\Z \text{ and } k\ndiv l\}}$. And from there it is clear that the sets under the union sign in \eqref{eq:union} form a monotonically increasing family of sets converging to the set in \eqref{eq:el}.

We now turn to proving that the fist claim holds. We first show that any $k$ satisfying the defining statement of the set in \eqref{eq:el} is necessarily in $\per{\bar{x}}$. Let such a $k\in \N$ be fixed and let $N_0$ be as in the definition of the set. It is clear that $x_{-N_0k}$ is a ``distinguishing feature'', an element contained in $\overline{\{x_{mk} \mid m\in\Z \}}$ but which cannot be in any of its $k-1$ forward iterates under $2^f$. It is thus truly a $k$-period point.

Conversely, take a $k\in\N$ which does not satisfy the statement in \eqref{eq:el}. Hence, there are infinitely many points in the $k$-step backward orbit of $x_0$ that are also in $\overline{\{x_l \mid l\in\Z \text{ and } k\ndiv l\}}$. In particular, infinitely many of them are in $\overline{\{x_{km+j} \mid m\in\Z\}}$ for some $0<j<k$. Similar reasoning as in the first paragraph of this proof allows us to conclude that, not just infinitely many, but all of the elements of the $k$-step orbit $\{x_{mk} \mid m \in \Z \}$ are contained in $\overline{\{x_{km+j} \mid m\in\Z\}}$ and hence their closure as well. But this implies that $\overline{\{x_{km} \mid m\in\Z\}}$ is not a $k$-periodic point by Lemma \ref{lm:containment}.
\end{proof}

\begin{proposition}\label{prop:eldiv}
Let $\bar{x}\in\inv{X}{f}$. Then $\per{\bar{x}}$ is non-empty (always contains at least $1$) and closed under taking divisors and least common multiples.
\end{proposition}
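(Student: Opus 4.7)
The plan is to establish each of the three claims by reducing to the equivalent characterisation of $\per{\bar{x}}$ provided by Lemma \ref{lm:percharacterisation}. Non-emptiness is immediate: for $k=1$ the candidate set is $\overline{\{x_m \mid m\in\Z\}}$, which by continuity of $f$ and compactness of $X$ satisfies $f(\overline{\{x_m\}})=\overline{\{x_{m+1}\}}=\overline{\{x_m\}}$, so it is fixed by $2^f$ and has fundamental period $1$.

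For closure under divisors, suppose $k\in\per{\bar{x}}$ and $d\mid k$; write $k=dk'$. The first observation is the elementary inclusion $\{x_l \mid d\ndiv l\}\subseteq\{x_l \mid k\ndiv l\}$ (because $d\ndiv l$ forces $k\ndiv l$), which passes to closures. Now pick an $N_0$ witnessing $k\in\per{\bar{x}}$ via \eqref{eq:el}, namely $x_{-N_0 k}\notin\overline{\{x_l \mid k\ndiv l\}}$. Rewriting the index as $-N_0 k=-(N_0 k')d$, the very same point lies outside the smaller closure $\overline{\{x_l \mid d\ndiv l\}}$, and Lemma \ref{lm:percharacterisation} applied in the converse direction yields $d\in\per{\bar{x}}$.

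For closure under least common multiples, let $k_1,k_2\in\per{\bar{x}}$ and $L=\mathrm{lcm}(k_1,k_2)$. The key algebraic observation is that $L\ndiv m$ if and only if $k_1\ndiv m$ or $k_2\ndiv m$, whence
\[
\overline{\{x_m \mid L\ndiv m\}} \;=\; \overline{\{x_m \mid k_1\ndiv m\}}\cup\overline{\{x_m \mid k_2\ndiv m\}}.
\]
Applying the stronger characterisation \eqref{eq:el2} to $k_1$ and $k_2$ separately, one obtains thresholds $N_1,N_2$ such that $x_{-Nk_i}\notin\overline{\{x_m \mid k_i\ndiv m\}}$ whenever $N\ge N_i$. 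Choosing any integer $M$ large enough that $M(L/k_i)\ge N_i$ for both $i=1,2$, the identity $-ML=-(M\cdot L/k_i)\cdot k_i$ shows that $x_{-ML}$ lies outside both closures, hence outside their union. Characterisation \eqref{eq:el} then gives $L\in\per{\bar{x}}$.

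The main obstacle, such as it is, is purely bookkeeping: once Lemma \ref{lm:percharacterisation} is in hand, each closure property reduces to a short argument about the divisibility conditions that index the relevant closures --- the divisor case exploiting an inclusion of indexing sets, and the lcm case exploiting a union decomposition --- combined with the trivial observation that $-N_0 k$ can be rewritten as $-(N_0 k')d$, and $-ML$ as $-(ML/k_i)k_i$.
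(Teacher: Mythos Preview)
Your proof is correct and follows the paper's approach: both use Lemma \ref{lm:percharacterisation} throughout, and your divisor argument is essentially identical to the paper's. The only difference is in the lcm step: the paper first reduces (via the just-established divisor closure) to the case of coprime $m,n$ and then shows $mn\in\per{\bar{x}}$, whereas you go directly to $L=[k_1,k_2]$ using the observation that $L\ndiv m$ iff $k_1\ndiv m$ or $k_2\ndiv m$ --- which holds for any pair, not just coprime ones. Your route is slightly more direct and avoids the coprime reduction altogether; otherwise the two arguments are the same.
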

\begin{proof}
Clearly $1\in \per{\bar{x}}$. The rest is proved by invoking Lemma \ref{lm:percharacterisation}. Let $k\in \per{\bar{x}}$, $d|k$ and let $N_0$ be chosen as in \eqref{eq:el}, i.e.\ $x_{-N_0k} \not\in \overline{\{x_l \mid l\in\Z \text{ and } k\ndiv l\}}$ and hence $x_{-(N_0 k/d) d} \not\in \overline{\{x_l \mid l\in\Z \text{ and } k\ndiv l\}} \supseteq \overline{\{x_l \mid l\in\Z \text{ and } d\ndiv l\}}$. Therefore $d\in \per{\bar{x}}$.

As we have already seen that this set is closed under taking divisors it will suffice to show that for any two co-prime $m,n\in \per{\bar{x}}$ their product $mn$ is in there as well. Chose $N_0$ to be the greater of the two integers associated to $m$ and $n$ in the context of \eqref{eq:el}. Then $x_{-(N_0m)n}\not\in\overline{\{x_l \mid l\in\Z \text{ and } n\ndiv l\}}$ and $x_{-(N_0n)m}\not\in\overline{\{x_l \mid l\in\Z \text{ and } m\ndiv l\}}$. As $m$ and $n$ are co-prime we have that $nm\ndiv l$ if and only if $n\ndiv l$ or $m\ndiv l$, hence $x_{-N_0(mn)}\not\in\overline{\{x_l \mid l\in\Z \text{ and } n\ndiv l\}}\cup\overline{\{x_l \mid l\in\Z \text{ and } m\ndiv l\}}=\overline{\{x_l \mid l\in\Z \text{ and } mn\ndiv l\}}$. Therefore $mn \in \per{\bar{x}}$.
\end{proof}

\section{Admissible sets of periods for \texorpdfstring{$2^f$}{2\^{}f}}\label{sec:admis}

A few facts about $\per{2^f}$ are obvious. By considering singleton sets we see that $\per{2^f}\supseteq \per{f}$ and also $1\in \per{2^f}$ as $X\in 2^X$ is a fixed point. In fact $\per{f^{<\w}}=[\mathcal{D}(\per{f})]\footnote{See Remark \ref{rem:finiteperiods} below.}\subseteq \per{2^f}$ since $F(X)\subseteq 2^X$. It turns out that $\per{2^f}$ itself is closed under taking prime power divisors. The question if it must be closed under taking any divisor remains open. Surprisingly it needs not to be closed under taking least common multiples. The construction of such an example occupies Section \ref{sec:236}. Intuitively one might think that if $A_0,A_1,\dots, A_{n-1}$ is an $n$-cycle and $B_0,B_1,\dots, B_{m-1}$ is an $m$-cycle in $2^X$, and $d|n$ then $A_0\cup A_d \cup \dots A_{n-d}$ should be a $d$-periodic point and $A_0\cup B_0$ should be a $[m,n]$-periodic point. But this does not hold in general as it could happen that their fundamental period is smaller than expected. As a trivial example consider a map over $X=\{0,1,2,3\}$ given by $f(x)=x+1 \bmod{4}$. Then $A_0=\{0,1,2\}\in 2^X$ is $2^f$-periodic with period $4$ but $A_0\cup f^2(A_0)=X$ is a fixed rather than a period $2$ point in $2^X$.

\begin{theorem}\label{tm:primepowerdivisors}
Given a continuous map $f\colon X\to X$, the set of periods $\per{2^f}$ of the induced map on $2^X$ contains $\per{f} \cup \{1\}$ and is closed under taking prime power divisors.
\end{theorem}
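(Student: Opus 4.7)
The easy inclusions $\per{f} \cup \{1\} \subseteq \per{2^f}$ I would dispatch first: if $x \in X$ has $f$-period $k$, then $(2^f)^j(\{x\}) = \{f^j(x)\}$ equals $\{x\}$ precisely when $k \mid j$, so the singleton $\{x\}$ has $2^f$-period exactly $k$. And $1 \in \per{2^f}$ because $f$ admits an $f$-minimal subset $M \subseteq X$ (by Zorn's lemma), which satisfies $f(M) = M$ and is therefore a fixed point of $2^f$.

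For the closure under prime-power divisors, let $A \in 2^X$ have $2^f$-period $n$ and let $p^k$ be a prime power with $p^k \mid n$; write $n = p^k m$ with $\gcd(p, m) = 1$. The natural candidate witness is
\[
B \;=\; \bigcup_{j=0}^{m-1} f^{j p^k}(A).
\]
Using $f^n(A) = A$ one checks $f^{p^k}(B) = B$, so the $2^f$-period of $B$ divides $p^k$, i.e.\ equals $p^l$ for some $0 \le l \le k$. If $l = k$, then $B$ is the desired witness. The difficulty is the ``too symmetric'' case $l < k$: for instance $X = \Z/4$ with $f$ the rotation and $A = \{0,1\}$ gives $B = X$ of period $1$ rather than $p^k = 2$. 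In this case I would pass to an $f^{p^k}$-minimal closed subset $M \subseteq B$, which exists by Zorn's lemma applied to the non-empty family of closed non-empty $f^{p^k}$-invariant subsets of $B$; such $M$ automatically satisfies $f^{p^k}(M) = M$, so its $2^f$-period divides $p^k$. In the example above, the two minimal sets $\{0,2\}$ and $\{1,3\}$ both have period $2 = p^k$.

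The genuinely hard step is to show that one can always arrange for the chosen $M$ to have $2^f$-period exactly $p^k$. Arguing by contradiction, suppose every $f^{p^k}$-minimal $M \subseteq B$ has $2^f$-period at most $p^{k-1}$; since the subgroup lattice of $\Z/p^k$ is a chain, this forces $f^{p^{k-1}}(M) = M$ for each such $M$. As $f^{p^k}|_M = (f^{p^{k-1}}|_M)^p$ is minimal, a standard argument (any closed non-empty forward $f^{p^{k-1}}$-invariant subset of $M$ is also $f^{p^k}$-invariant, hence equals $M$) shows $f^{p^{k-1}}|_M$ is itself minimal on $M$. I would try to derive a contradiction by combining this rigidity with the orbit combinatorics of $\{A, f(A), \dots, f^{n-1}(A)\}$, aiming to show $A$ must then have $2^f$-period dividing $p^{k-1} m$, contradicting the assumption that its period is $n = p^k m$. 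An alternative route, applicable whenever some element of $A$ admits a non-recurrent full orbit, is to invoke Proposition~\ref{prop:nonrecurrent} directly to conclude $\per{2^f} = \N$. The main technical challenge is thus the combinatorial-dynamical analysis in the minimal-subset case, and this is where the chain structure of subgroups of $\Z/p^k$, peculiar to prime powers, enters essentially.
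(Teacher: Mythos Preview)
Your proposal is incomplete: the crucial step is explicitly left open. You set up the candidate $B = \bigcup_{j=0}^{m-1} f^{jp^k}(A)$, correctly identify the failure mode (period of $B$ may drop to some $p^l$ with $l<k$), and then propose passing to an $f^{p^k}$-minimal $M\subseteq B$. But the entire content of the theorem lies in the sentence you leave as ``I would try to derive a contradiction \dots\ aiming to show $A$ must then have $2^f$-period dividing $p^{k-1}m$.'' No mechanism is given for transferring the hypothesis ``every $f^{p^k}$-minimal $M\subseteq B$ satisfies $f^{p^{k-1}}(M)=M$'' back up to a statement about $A$ or $B$. Minimal sets sit at the bottom of the invariant-set lattice; $B$ need not decompose as a union of them, and knowing each minimal piece has extra symmetry does not obviously force $f^{p^{k-1}}(B)=B$, let alone $f^{p^{k-1}m}(A)=A$. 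Your alternative route via Proposition~\ref{prop:nonrecurrent} only handles the case where some point of $A$ has a non-recurrent full orbit, which is far from generic. (Minor: for $1\in\per{2^f}$ you invoke Zorn's lemma unnecessarily---$X$ itself is a fixed point of $2^f$.)

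The paper's argument is quite different and avoids minimal sets entirely. It works with a single full orbit: setting $k=p^{\alpha}$ and $l=n/p$, one uses Lemma~\ref{lm:containment} to pick $x_0\in A_0\setminus A_l$, threads a full orbit $\bar{x}$ through the cycle $A_0,\dots,A_{n-1}$, and then shows directly that $k\in\per{\bar{x}}$ via the characterisation in Lemma~\ref{lm:percharacterisation}. The heart of the proof is a number-theoretic argument: if $x_0$ were in $\overline{\{x_{mk+i}\mid m\in\Z\}}$ for some $0<i<k$, one bootstraps to $x_0\in A_{s\tilde{i}\bmod n}$ for all $s$, and then uses that $(\tilde{i},n)\mid l$ (since $p^{\alpha}\nmid\tilde{i}$) to conclude $x_0\in A_l$, a contradiction. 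This is where the prime-power hypothesis actually does work, and it is a concrete computation rather than an existence argument via Zorn.
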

\begin{proof}
We have already seen that $\per{f} \cup \{1\} \subseteq \per{2^f}$. Given an $n\in\per{2^f}$ and its prime factorisation $n=p_1^{\alpha_1} p_2^{\alpha_2}\cdots p_r^{\alpha_r}$ where $r\ge 1$, for the second part of the claim it will suffice to find a full orbit $\bar{x}\in\inv{X}{f}$ for which $\per{\bar{x}}$ contains $p_1^{\alpha_1}$ as the result will then follow from Proposition \ref{prop:eldiv}.

To that end we set $k=p_1^{\alpha_1}$ and $l=n/p_1=p_1^{\alpha_1-1} p_2^{\alpha_2}\cdots p_r^{\alpha_r}$, and let $A_0,\dots,A_{n-1}$ be a periodic orbit for $2^f$ of period $n$. By Lemma \ref{lm:containment} there exists $x_0\in A_0\setminus A_l$, and as $A_i$'s are mapped surjectively onto each other, it is possible to find a full orbit $\bar{x}=(\dots,x_{-1},x_0,x_1,x_2,\dots)$ of $x_0$ such that $x_{mn+i}\in A_i$ for all $m\in\Z$. We claim that $k\in\per{\bar{x}}$, i.e.\ that $\overline{\{x_{mk} \mid m\in\Z\}}$ is $k$-periodic under $2^f$.

Firstly note that $x_{-mn+i}\in A_i\setminus A_{i+k \bmod{n}}$ for $m\in\N$, as otherwise if $x_{-mn+i}\in A_{i+k \bmod{n}}$ then after mapping it forward by $f^{mn-i}$ we would have a contradiction with $x_{0}\in A_0\setminus A_{k}$. Similar reasoning allows us to conclude that, as we go backwards along the orbit, the pre-images $x_0,x_{-n},x_{-2n},x_{-3n},\dots$ belong to, possibly, more and more complements of different $A_i$'s and, as there are only finitely many of those, this number must stabilise. The backward iterate at which this happens is then taken to be $x_0$ and all the other indices are shifted accordingly. Note that this does not affect the claim we wish to prove as $\per{\bar{x}}=\per{\s^{t}(\bar{x})}$ for any $t\in\Z$. Also note that for this new $x_0$ we still have $x_0\in A_0\setminus A_l$. This modification will however allow us that from $x_0\in A_j$ for some $j$ we infer $x_{mn+i}\in A_{i+j \bmod{n}}$ for all $m\in\Z$ where before we could conclude this only for positive $m$'s. Essentially the same trick was used previously in the proof of Lemma \ref{lm:percharacterisation}.

After we altered the enumeration in $\bar{x}$ we are ready to conclude the proof by showing that (now modified) $x_0\notin\overline{\{x_t \mid t\in\Z \text{ and } k \ndiv t \}}$. Note that
\begin{equation*}
\overline{\{x_t \mid t\in\Z \text{ and } k \ndiv t \}}=\bigcup_{i=1}^{k-1}\overline{\{x_{mk+i} \mid m\in\Z\}}
\end{equation*}
and for the sake of getting a contradiction we assume that $x_0\in\overline{\{x_{mk+i} \mid m\in\Z\}}$ for some $0<i<k$. This means that either $x_0=x_{mk+i}$ for some $m\in\Z$ and therefore $x_0\in A_{\tilde{i}}$ where
\begin{equation*}
\tilde{i}\equiv mk+i\pmod{n},\quad \text{ hence }\quad \tilde{i}\equiv i\pmod{k},
\end{equation*}
or $x_0$ is a limit of such points ($x_{mk+i}$'s) out of which infinitely many must fall within the same congruence class with respect to $n$, say $\tilde{i}$, and thus also in the same $A_{\tilde{i}}$ for some $0\le\tilde{i}<n$, which must also satisfy $\tilde{i}\equiv i\pmod{k}$. As $A_{\tilde{i}}$ is closed, in both cases we get $x_0\in A_0\cap A_{\tilde{i}}$.

From here we can conclude that in the periodic case $x_0\in A_{\tilde{i}}\cap A_{2\tilde{i} \bmod{n}}$ as $x_0=x_{mk+i}=x_{2mk+2i}$. Also, if $x_0$ is a limit point of some subsequence in $\{x_{mn+\tilde{i}} \mid m\in\Z \}\subseteq A_{\tilde{i}}$, using the reasoning described above and in light of the fact that $x_0\in A_{\tilde{i}}$, this subsequence is also in $A_{2\tilde{i} \bmod{n}}$ and so must be $x_0$ as its limit. In both cases we henceforth conclude $x_0\in A_0\cap A_{\tilde{i}}\cap A_{2\tilde{i} \bmod{n}}$. Continuing in this fashion we see that $x_0 \in A_j$ for any $j$ that satisfies
\begin{equation*}
j \equiv s\tilde{i} \pmod{n} \text{ for some } s\in\N_0.
\end{equation*}
To obtain a contradiction with $x_0\notin A_l$ it now only remains to show that there exist $s\in\N_0$ for which $s\tilde{i}\equiv l\pmod{n}$. It is an elementary fact from number theory that there exists $\tilde{s}\in\N_0$ such that $\tilde{s}\tilde{i} \equiv (\tilde{i},n) \pmod{n}$, where $(a,b)$ stands for the greatest common divisor of integers $a$ and $b$. Note that $\tilde{i}\not\equiv 0 \pmod{k}$ and hence $p_1^{\alpha_1}=k\ndiv \tilde{i}$. Thus $(\tilde{i},n)=(\tilde{i},n/p_1)=(\tilde{i},l)\mid l$ and setting $s=\frac{l}{(\tilde{i},n)}\tilde{s}$ gives the desired conclusion.
\end{proof}

We now turn to showing that $\per{2^f}$ is not always closed under taking least common multiples. But for this we need first to revisit the notion of \emph{almost minimal systems}.

\section{Almost totally minimal Cantor system}\label{sec:atm}
Recall that any two non-compact, locally compact, totally disconnected, metrizable spaces with no isolated points are homeomorphic (see e.g.\ \cite{Danilenko-AlmostMinimal}*{Proposition 1.1}). These are essentially equal to the Cantor set without a point, which is in turn homeomorphic to a countable union of Cantor sets. We shall denote such a set by $\Cs^*$. In \cite{Danilenko-AlmostMinimal} Danilenko gives a direct limit construction of a class of minimal systems on $\Cs^*$. The system $(\Cs^*, T)$ is said to be \emph{minimal} if the only non-empty, closed, strongly $T$-invariant subset of $\Cs^*$ is $\Cs^*$ itself, or equivalently if any full orbit\footnote{If $T$ is non-invertible then there will points with more than one distinct full orbits.} of any point is dense in $\Cs^*$. In the compact case this is equivalent to asking that the $\w$-limit set of any point is whole of the state space, but here one has to be careful with the definition of minimality as some points can have their $\w$-limit sets empty due to non-compactness even if their full orbits are dense. In fact Danilenko proved that for any invertible minimal map on $\Cs^*$ the set of points with their forward orbit dense in $\Cs$ (those points $x$ for which $\w(x)=\Cs^*$) will be a dense, $G_\delta$ set with empty interior (see \cite{Danilenko-AlmostMinimal}*{Theorem 1.2}).

The class of maps constructed there conveniently extends to a class of continuous homeomorphisms of the Cantor set $\Cs$ which is obtained as one point compactification of $\Cs^*$ with infinity point being mapped onto itself. Such maps, with one fixed point and all the other points having dense full orbits are referred to as \emph{almost minimal systems} and in particular they are examples of \emph{essentially minimal systems} which are defined as those systems which have one unique minimal subsystem (see \cite{HPS}). To make our proofs in Section \ref{sec:236} work we need a map that is \emph{almost totally minimal} meaning that the system $(\Cs,T^m)$ is almost minimal for any iterate $T^m$ of the original map $T$ where $m\in\N$, in other words, that the system $(\Cs^*,T|_{\Cs^*})$ with the fixed point removed is totally minimal in the usual sense.

\begin{theorem}\label{tm:atm}
There exists an almost totally minimal homeomorphism $T \colon \Cs \to \Cs$ of the Cantor set $\Cs$. Such $T$ has exactly one fixed point $x^0\in\Cs$ and all the other points have their full orbit with respect to $T^m$ dense in $\Cs$, for any $m\in\N$ i.e.\
\begin{equation*}
(\forall x \in \Cs \setminus \{x^0\})\ (\forall m \in \N) \quad \clorbit{x}{T^m}=\Cs,
\end{equation*}
where $\clorbit{x}{T^m} = \clorbit{\bar{x}}{T^m} = \overline{\{ x_{mk} \mid k\in\Z \}}$ and $\bar{x}=(x_i)_{i\in\Z}\in\inv{\Cs}{T}$ is the full orbit of $x=x_0$.
\end{theorem}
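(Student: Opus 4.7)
The plan is to realise $T$ as the shift on an inverse limit $\Cs = \varprojlim G_n$ of finite directed graphs, via the graph-cover technique of Gambaudo and Martens. Each $G_n$ will carry a bijective successor function $\s_n$, so that $G_n$ is a disjoint union of oriented cycles, and the bonding morphisms $\pi_n\colon G_{n+1}\to G_n$ will intertwine the successor functions. The inverse limit $\Cs$ is then a compact zero-dimensional metric space and the induced map $T\colon \Cs\to\Cs$ is a homeomorphism. The vertex set of $G_n$ will consist of a distinguished self-loop vertex $v_n^0$, whose inverse-limit will be the fixed point $x^0$, together with a collection of longer oriented cycles playing the role of Kakutani--Rokhlin towers. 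Every bonding map is required to send $v_{n+1}^0$ to $v_n^0$, making $x^0$ the unique fixed point of $T$.

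To ensure that $x^0$ is a genuine accumulation point of $\Cs^*$ and that the restriction $T|_{\Cs^*}$ is minimal, I would arrange at each level $n$ that the preimage $\pi_n^{-1}(v_n^0)$ contains, besides $v_{n+1}^0$ itself, several of the long cycles of $G_{n+1}$: since this preimage must be closed under the successor function, whole cycles of $G_{n+1}$ get collapsed to the self-loop at $v_n^0$. Choosing the number and length of these ``collapsed'' cycles to grow with $n$ will make $[v_n^0]_n$ a strictly shrinking basic clopen neighbourhood of $x^0$ with infinitely many non-fixed points, while the remaining long cycles of $G_{n+1}$ wind many times around each of the long cycles of $G_n$, in a schedule designed so that every non-fixed full orbit of $T$ visits every basic cylinder and hence is dense in $\Cs$.

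The main technical obstacle is upgrading minimality on $\Cs^*$ to \emph{total} minimality, i.e.\ guaranteeing that $T^m|_{\Cs^*}$ remains minimal for every $m\in\N$. To achieve this I would insist that, for each $m$, the cycle-length combinatorics are eventually divisible by $m$ (for instance by arranging the ratios of cycle-lengths at consecutive levels to range over all positive integers as $n$ varies), and that the $\s_{n+1}^m$-orbit of any non-fixed vertex of $G_{n+1}$ projects onto all of $G_n\setminus\{v_n^0\}$ for every $n$ and every $m$. This eliminates any potential continuous factor of $(\Cs,T)$ onto a finite rotation, which is the only obstruction to total minimality of an almost minimal Cantor system. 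The delicate part is to carry out an explicit inductive construction satisfying simultaneously (a) the shrinking of $[v_n^0]_n$ to $\{x^0\}$, (b) the equidistribution of non-fixed orbits across all long cycles at every level, and (c) the divisibility conditions just described, while keeping each $G_n$ finite. I would handle this by a diagonal-type construction: at step $n\to n+1$, dedicate part of the new combinatorics to shrinking the fixed-point neighbourhood, part to witnessing density for the finitely many target cylinders introduced so far, and part to killing the next potential finite-order topological eigenvalue, verifying afterwards that the resulting inverse limit $\Cs$ and induced homeomorphism $T$ have all the required properties.
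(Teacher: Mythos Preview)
Your overall approach---graph covers with a distinguished self-loop producing the fixed point, and long cycles whose winding patterns force density of non-fixed orbits---is indeed the one the paper follows. But the specific framework you set up cannot actually do the job. You stipulate that each $G_n$ carries a \emph{bijective} successor $\sigma_n$ and that $\pi_n\circ\sigma_{n+1}=\sigma_n\circ\pi_n$. As you yourself observe, this forces $\pi_n^{-1}(v_n^0)$ to be a union of whole cycles; but the same argument shows that \emph{every} cycle of $G_{n+1}$ is mapped by $\pi_n$ onto a single cycle of $G_n$. Consequently the inverse system decomposes along a tree of cycles, and the full $T$-orbit of any point stays forever within one branch of that tree: if the point lies in a cycle $C_{n+1}\subset G_{n+1}$ mapping onto $C_n\subset G_n$, its orbit at level $n$ is trapped in $C_n$ and can never visit the vertices of any other cycle of $G_n$. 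So the moment $G_n$ has more than one cycle (which you need, since you want both $\{v_n^0\}$ and at least one long cycle), minimality on $\Cs^*$ already fails. In particular the sentence ``the remaining long cycles of $G_{n+1}$ wind many times around each of the long cycles of $G_n$'' is incompatible with your own hypotheses.

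The paper's fix is precisely to drop the bijectivity of the successor at the graph level: at each level there is a \emph{single} long cycle together with the self-loop at $v^{(i)}_0$, and $v^{(i)}_0$ has \emph{two} outgoing edges (the self-loop and the first edge of the long cycle). The bonding maps are only required to be $+$-directional edge-surjective graph homomorphisms in the sense of Shimomura, which allows the long cycle at level $i+1$ to alternate between winding through the self-loop and winding through the long cycle at level $i$. Total minimality is then obtained not by an abstract eigenvalue argument but by a concrete arithmetic choice: the long cycle at level $i+1$ has length $(|V_i|+C_i)\cdot(i+1)!+1$ and winds $(i+1)!$ times over level $i$, with $C_i$ chosen so that $|V_i|+C_i$ is coprime to $(i+1)!$. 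This coprimality is exactly what forces the $(M+1)!$-step orbit of any non-fixed point to hit every basic cylinder at level $M$, which gives density for $T^m$ once $m\mid (M+1)!$.
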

\begin{remark}
We remark in passing that, as $\Cs$ is a Baire space without isolated points, $\clorbit{\bar{x}}{T}=\Cs$ is equivalent to $\lambda(\bar{x})=\Cs$ for any full orbit $\bar{x}\in\inv{\Cs}{T}$.
\end{remark}

To prove this theorem we revisit Gambaudo and Martens' combinatorial approach for constructing self-maps of the Cantor set (or indeed any $0$-dimensional compact metric space) via \emph{graph covers} (see \cite{GraphCovers}).

\subsection{Graph covers}
It is a well-known fact that any totally disconnected, compact and Hausdorff space (these are sometimes called \emph{Stone spaces}) can be obtained as an inverse limit of a (countable) system of discrete finite spaces. This property actually characterises Stone spaces via Stone duality (see \cite{Nagami}*{Proposition 8-5}) which is why these are also occasionally called \emph{profinite spaces}.

It turns out, using similar ideas, that it is possible to give a complete description not just for Stone spaces, but also for the self-maps on them by adding arrows to the discrete spaces forming the inverse system and, thus, creating an inverse limit of directed graphs. We briefly recall the main results of this theory following Shimomura's treatment in \cite{Shimomura}.

A \emph{graph}\footnote{All graphs we consider are directed.} is a pair $G=(V,E)$ where $V$ is a finite set of vertices and $E\subseteq V \times V$ is a set of directed edges. We additionally require that each vertex has at least one outgoing and one incoming edge. A \emph{graph homomorphism} between graphs $(V,E)$ and $(V',E')$ is a vertex map $\phi\colon V \to V'$ which respects the edges, i.e.\ for any pair $(v,w)\in E$ it must be $(\phi(v),\phi(w))\in E'$. A graph homomorphism is said to be \emph{$+$-directional} if $\phi(w_1)=\phi(w_2)$ whenever both $(v,w_1)\in E$ and $(v,w_2)\in E$. If additionally $(v_1,w)\in E$ and $(v_2,w)\in E$ implies $\phi(v_1)=\phi(v_2)$ it is said that $\phi$ is \emph{bidirectional}. A \emph{graph cover} is a +-directional homomorphism of graphs that is also \emph{edge-surjective} meaning that the map which $\phi$ naturally induces on the set of edges $\phi\colon E \to E'$ is surjective.

Given a sequence of graph covers $G_0 \get{\phi_0} G_1 \get{\phi_1} G_2 \get{\phi_2} \cdots$ one forms a Stone space as the inverse limit
\begin{equation*}
G_\infty =\varprojlim G_i= \left\{ (\sg{v}{i} )_{i\ge 0} \in \prod\limits_{i=0}^{\infty}V_i \mid \sg{v}{i}=\phi_i(\sg{v}{i+1}{}) \text{ for all } i\in\N_0 \right\}.
\end{equation*}
It is possible to define a self-map $\phi_\infty \colon G_\infty \to G_\infty$ by setting
\begin{equation*}
\phi_\infty \left((\sg{v}{i})_{i\ge 0}\right) = \left(\phi_i(\sg{w}{i+1})\right)_{i\ge 0}
\end{equation*}
where $\sg{w}{i}$ is any vertex for which the edge $(\sg{v}{i},\sg{w}{i})$ is in $E_i$. Intuitively, the map is given by the rule `follow the arrows' except that if there are more than one outgoing arrow from the chosen vertex at the any given level, we might need to peek at one level below whose finer resolution will help us decide which of the arrows to follow. +-directionality of the covers ensures that this process, or indeed $\phi_\infty$ is a well-defined continuous self-map of the Stone space $G_\infty$. If additionally each of the bounding maps in the sequence is bidirectional then the map $\phi_\infty$ is a homeomorphism, see \cite{Shimomura}*{Lemma 3.5}.

The full correspondence is given by

\begin{theorem}[\cite{Shimomura}*{Theorem 3.9}]
A topological (compact, metric, surjective) dynamical system is $0$-dimensional if and only if it is topologically conjugate to $G_{\infty}$ for some sequence of covers $G_0 \get{\phi_0} G_1 \get{\phi_1} G_2 \get{\phi_2} \cdots$.
\end{theorem}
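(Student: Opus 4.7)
The proof naturally splits into the two implications of the equivalence.

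For the easy direction, that every $(G_\infty,\phi_\infty)$ arising from a sequence of graph covers is a compact, metric, surjective, $0$-dimensional dynamical system, I would simply verify each adjective in turn. Compactness, metrizability, and total disconnectedness follow because $G_\infty$ is a closed subspace of the countable product $\prod_{i\ge 0} V_i$ of finite discrete spaces. Continuity and well-definedness of $\phi_\infty$ are exactly what $+$-directionality buys, as recorded in \cite{Shimomura}*{Lemma 3.5}. For surjectivity of $\phi_\infty$, I would first note that edge-surjectivity of each $\phi_i$, combined with a standard K\"onig's lemma argument on the finite sets of level-wise edge preimages, shows that the canonical projection $G_\infty\to V_i$ is surjective for every $i$. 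Then $\phi_\infty(G_\infty)$ is closed in $G_\infty$ and projects onto every $V_i$, and any such closed subset must equal $G_\infty$ (if $v\in G_\infty$ then picking $u_i\in\phi_\infty(G_\infty)$ with $u_i^{(i)}=v^{(i)}$ forces $u_i\to v$ because compatibility under the $\phi_j$'s makes the earlier coordinates of $u_i$ agree with those of $v$).

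For the hard direction, let $(X,f)$ be a compact metric, surjective, $0$-dimensional system. My strategy is to realise it as $G_\infty$ for an inverse sequence of graph covers built from a carefully chosen nested sequence of clopen partitions. Using $0$-dimensionality and compactness I can build finite clopen partitions $\mathcal{P}_0 \prec \mathcal{P}_1 \prec \mathcal{P}_2 \prec \cdots$ with $\operatorname{mesh}(\mathcal{P}_i)\to 0$, and I additionally arrange the dynamical compatibility condition that for every cell $U\in\mathcal{P}_{i+1}$ the image $f(U)$ is contained in a single cell of $\mathcal{P}_i$. This can always be secured by replacing each $\mathcal{P}_{i+1}$ with its common refinement with $\{f^{-1}(W) : W\in\mathcal{P}_i\}$, exploiting continuity of $f$ and the fact that preimages of clopen sets are clopen.

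I would then set $V_i=\mathcal{P}_i$, declare $(U,W)\in E_i$ iff $f(U)\cap W\ne\emptyset$, and let $\phi_i\colon V_{i+1}\to V_i$ send each cell to the unique cell of $\mathcal{P}_i$ that contains it. Edge-preservation is immediate from nestedness, $+$-directionality is precisely the compatibility condition arranged above, and edge-surjectivity follows by picking a witness $x\in U'$ with $f(x)\in W'$ and taking the $\mathcal{P}_{i+1}$-cells containing $x$ and $f(x)$. Surjectivity of $f$ on $X$ is exactly what ensures every vertex has an incoming edge, while forward iteration supplies outgoing ones. The conjugacy is the map $\pi\colon X\to G_\infty$ sending $x$ to the sequence $(U^{(i)}_x)_{i\ge 0}$ of cells containing it; injectivity and surjectivity follow from $\operatorname{mesh}(\mathcal{P}_i)\to 0$ together with compactness, continuity is immediate since clopen cylinders in $G_\infty$ pull back to clopen sets in $X$, and the intertwining $\pi\circ f=\phi_\infty\circ\pi$ drops out of the definition of the edge relation together with $+$-directionality (which makes the choice of outgoing edge in the formula for $\phi_\infty$ irrelevant).

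The main technical obstacle I anticipate is the inductive refinement step: simultaneously maintaining nestedness of the $\mathcal{P}_i$'s, forcing $\operatorname{mesh}(\mathcal{P}_i)\to 0$, and preserving the condition that $f$ sends each finer cell into a single coarser cell (without destroying it at later levels) requires a somewhat delicate bookkeeping. However, it is entirely powered by uniform continuity of $f$ applied at each step and by the fact that the common refinement of two clopen partitions is again clopen, so the argument reduces to a standard diagonal construction once the inductive hypothesis is set up correctly.
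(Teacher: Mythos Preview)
The paper does not supply a proof of this statement; it is quoted verbatim from Shimomura's work as background for the graph-cover machinery used later, so there is no in-paper argument to compare your proposal against. Your sketch is correct and follows the standard route (which is also Shimomura's): for the nontrivial direction one builds a nested sequence of finite clopen partitions, refining at each stage by the pullback partition $\{f^{-1}(W):W\in\mathcal P_i\}$ so that every cell at level $i+1$ is carried by $f$ into a single cell at level $i$ --- this is exactly $+$-directionality --- and the itinerary map $x\mapsto(U^{(i)}_x)_{i\ge 0}$ then furnishes the conjugacy, with compactness and $\mathrm{mesh}\to 0$ giving bijectivity. The easy direction is, as you say, routine verification.
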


We are now ready to construct the system described in Theorem \ref{tm:atm}. The inspiration for this came from the construction in \cite{ShimomuraScrambled}. Very recently similar techniques appeared in \cite{ShimomuraNew}.

\subsection*{Proof of Theorem \ref{tm:atm}}
Let $G_0=(\{\sg{v}{0}_0\},\{(\sg{v}{0}_0,\sg{v}{0}_0)\})$ be one vertex with a self-loop. Given $G_i$ we shall inductively define $G_{i+1}$. The graph $G_{i+1}$ will consist of $|V_{i+1}|=(|V_i|+C_i)\cdot (i+1)! +1$ vertices\footnote{Thus $|V_n|$ grows faster than superfactorials $n!(n-1)!\cdots 2! 1!$.} $V_{i+1}=\{\sg{v}{i+1}_0, \dots, \sg{v}{i+1}_{|V_{i+1}|-1}\}$ which make a cycle in that order, and additionally $E_{i+1}$ also contains a self-loop at $\sg{v}{i+1}_0$, see Figure \ref{fig:graphs}. The constants $C_i\in \N$ are inductively chosen so that $|V_i|+C_i$ and $(i+1)!$ are co-prime. It remains to specify the cover $\phi_i \colon V_{i+1} \to V_i$ by the formula
\begin{equation*}
\phi_i (\sg{v}{i+1}_k)=\sg{v}{i}_{I(i,k)}
\end{equation*}
where
\begin{equation*}
I(i,k)=
\begin{cases}
0, & \text{if } k\equiv 0, 1,2, \dots, \text{ or } C_i \pmod{|V_i|+C_i},\\
l, & \text{otherwise, where } l\equiv k-C_i \pmod{|V_i|+C_i}.
\end{cases}
\end{equation*}

It is perhaps easier to see what is going if we look at how $\phi_i$ maps the edges of $G_{i+1}$ onto the edges of $G_i$. Let us denote the self loops in $G_i$ and $G_{i+1}$ by $e_0$ and $e_0'$ respectively, and let $\{f_1,\dots, f_{|V_i|}\}$ and $\{f_1',\dots, f_{|V_{i+1}|}'\}$ be successive edges in their bigger cycles respectively. The formulae above capture the fact that the cycle $\{f_1',\dots, f_{|V_{i+1}|}'\}$ on the $(i+1)$\textsuperscript{st} level is wound $(i+1)!$ times over the full circuit (with the first edge being repeated $C_i$ times) in the graph $G_i$ in the following succession
\begin{equation*}
\underbrace{\underbrace{e_0 \dots e_0}_{C_i\; \text{times}} f_1 \dots f_{|V_i|} \;\; \underbrace{e_0 \dots e_0}_{C_i\; \text{times}} f_1 \dots f_{|V_i|} \quad \cdots \quad \underbrace{e_0 \dots e_0}_{C_i\; \text{times}} f_1 \dots f_{|V_i|}}_{(i+1)!\;\; \text{times}} \;\; e_0.
\end{equation*}
In particular, looking at Figure \ref{fig:graphs}, $\phi_1\colon G_2 \to G_1$ maps denoted edges as follows:
\begin{equation*}
\begin{array}{@{}*{11}{c}@{}}
 f_1',& f_2',& f_3',& f_4',& f_5',& f_6',& f_7',& f_8',& f_9',& f_{10}',& f_{11}'\\
\rotatebox[origin=c]{-90}{$\mapsto$}& \rotatebox[origin=c]{-90}{$\mapsto$}&\rotatebox[origin=c]{-90}{$\mapsto$}&\rotatebox[origin=c]{-90}{$\mapsto$}&\rotatebox[origin=c]{-90}{$\mapsto$}&\rotatebox[origin=c]{-90}{$\mapsto$}&\rotatebox[origin=c]{-90}{$\mapsto$}&\rotatebox[origin=c]{-90}{$\mapsto$}&\rotatebox[origin=c]{-90}{$\mapsto$}&\rotatebox[origin=c]{-90}{$\mapsto$}&\rotatebox[origin=c]{-90}{$\mapsto$}\\
e_0,& e_0,& f_1,& f_2,& f_3,& e_0,& e_0,& f_1,& f_2,& f_3,& e_0
\end{array}
\end{equation*}
Also note the additional twist over $e_0$ at the end which is needed to ensure that these are bidirectional covers.

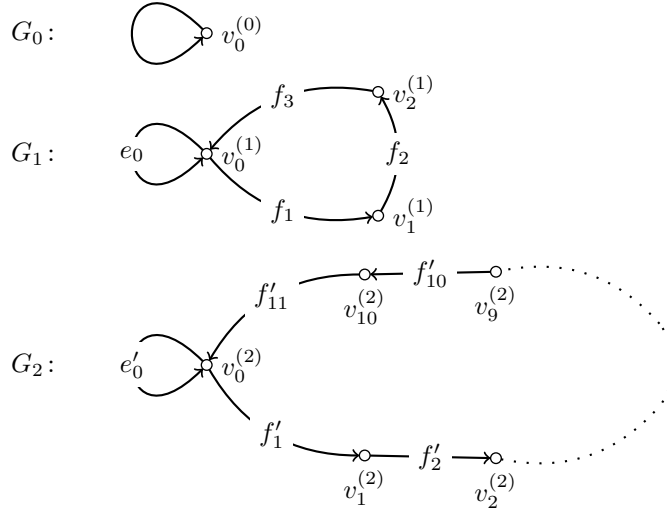
\begin{figure}
\GraphInit[vstyle=Welsh]
\tikzset{VertexStyle/.append style = {shape = circle,fill = white,inner sep = 0pt,outer sep = 0pt,minimum size = 4pt}}
\begin{tikzpicture}[x=0.8cm,y=0.8cm]
\begin{scope}
	\Vertex[L=$v^{(0)}_0$]{0}
	\node[left=50]{$G_0\colon$};

	\Loop[dist=50, dir= WE,style={->}](0)

\end{scope}
\begin{scope}[shift={(0,-2)}]
	\Vertex[L=$v^{(1)}_0$]{0}
	\node[left=50]{$G_1\colon$};
	\Vertex[a=-20,d=3, L=$v^{(1)}_1$, Lpos=0]{1}
	\Vertex[a=20,d=3, L=$v^{(1)}_{2}$, Lpos=0]{2}

	\Loop[label=$e_0$,labelstyle={fill=white},dist=50, dir= WE,style={->}](0)
	\Edge[label=$f_1$,style={->,bend right}](0)(1)
	\Edge[label=$f_2$,style={->,bend right}](1)(2)
	\Edge[label=$f_{3}$,style={->,bend right}](2)(0)
\end{scope}
\begin{scope}[shift={(0,-5.5)}]
	\Vertex[L=$v^{(2)}_0$]{0}
	\node[left=50]{$G_2\colon$};
	\Vertex[a=-30,d=3, L=$v^{(2)}_1$, Lpos=-90]{1}
	\Vertex[a=-18,d=5, L=$v^{(2)}_2$, Lpos=-90]{2}
	\Vertex[a=30,d=3, L=$v^{(2)}_{10}$, Lpos=-90]{10}
	\Vertex[a=18,d=5, L=$v^{(2)}_9$, Lpos=-90]{9}
	\Vertex[NoLabel, empty=true, a=-5, d=7.5]{R1}
	\Vertex[NoLabel, empty=true, a=5, d=7.5]{R2}

	\Loop[label=$e_0'$,labelstyle={fill=white},dist=50, dir= WE,style={->}](0)
	\Edge[label=$f_1'$,style={->,bend right}](0)(1)
	\Edge[label=$f_2'$,style={->}](1)(2)
	\Edge[label=$f_{10}'$,style={->}](9)(10)
	\Edge[label=$f_{11}'$,style={->,bend right}](10)(0)
	\Edges[style={loosely dotted,bend right}](2,R1,R2,9)
\end{scope}
\end{tikzpicture}
\caption{The first $3$ steps of the construction, with $C_0=1$ and $C_1=2$.}
\label{fig:graphs}
\end{figure}

Let $G_\infty$ be the inverse limit of the just constructed inverse system $G_0\get{\phi_0}G_1\get{\phi_1}G_2\get{\phi_2}\cdots$. As each vertex in $G_i$ is covered with at least $2$ vertices of $G_{i+1}$ one easily checks that $G_\infty$ has no isolated points and is therefore homeomorphic to the Cantor set. As we mentioned earlier, the sequence consists of bidirectional covers and $\phi_\infty \colon G_\infty \to G_\infty$ is thus a homeomorphism of the Cantor set.

It remains to be seen that $(G_\infty, \phi_\infty)$ is almost totally minimal. One fixed point of the system is clearly $(\sg{v}{i}_0)_{i\ge 0}$. Any other point $(\sg{v}{i}_{p_i})_{i\ge 0}\in G_\infty$ will have $p_i>0$ for all $i$ large enough. Let $x_\infty=(\sg{v}{i}_{p_i})_{i\ge 0}$ be one such a point. We wish to show that for any given $m\in\N$ the full orbit under $\phi_\infty^m$ of this point is dense in $G_\infty$. We fix one such $m$.

We shall denote by $[\sg{v}{i}_k]$ the set of all the points in $G_\infty$ with $i$\textsuperscript{th} coordinate equal to $\sg{v}{i}_k$. Recall that these sets $[\sg{v}{i}_k]$, where $i\in\N_0$ and $0\le k\le |V_i|-1$, form a clopen basis for the topology on $G_\infty$. They are neatly nested within each other and those on the level $i+1$ refine all those on levels less than $i$. In fact one has the relation $[v_{k}^{(i+1)}] \subseteq [\phi_i (\sg{v}{i+1}_k)]=[\sg{v}{i}_{I(i,k)}]$. It will thus suffice to see that for any level $L\in\N_0$, one can find $\phi_\infty^m$-iterates of $x_\infty$ that hit each of the sets $[\sg{v}{L}_k]$, for $0\le k \le |V_L|-1$.

Let $L$ be one such level and choose $M\ge\max\{L, m-1\}$ large enough so that $p_{M+1}>0$. We claim that $\phi_\infty^{(M+1)!}$-iterates of $x_\infty$ hit each of the sets $[\sg{v}{M}_k]$, for $0\le k \le |V_{M}|-1$, which will suffice as $m|(M+1)!$ and $M\ge L$.

Note that as $p_{M+1}>0$ it is possible to infer where a large portion of iterates of $x_\infty$ are mapped within level $M+1$ without actually having to look at what happens at the level below\footnote{By the level below we mean level $M+2$, i.e.\ a level with a finer structure.}. In particular we know that $\phi_\infty^{-p_{M+1} + k}(x_\infty) \in [\sg{v}{M+1}_k]$ for each $0\le k \le |V_{M+1}|-1$. This means that those same iterates hit each of the sets $[\sg{v}{M}_k]$, for $0\le k \le |V_{M}|-1$, at least $(M+1)!$ times, as each of the vertices in $G_{M+1}$ covers those of $G_{M}$ with at least multiplicity $(M+1)!$. This, along with the choice of $C_M$ (the number of successive repetitions of $\sg{v}{M}_0$ in the covering map), creates an apt offset implying that $\phi_\infty^{(M+1)!}$-iterates of $x_\infty$ end up in all of the sets $[\sg{v}{M}_k]$.

To put it precisely, let $p_{M+1} = q (M+1)! + r$ with $0 \le r<(M+1)!$ as in Euclidean division. Then if $k=s(M+1)!+r$ with $0\le s \le |V_{M}|+C_M-1$ we have $\phi_\infty^{(s-q) (M+1)!}(x_\infty) = \phi_\infty^{-q (M+1)! - r + k}(x_\infty) = \phi_\infty^{-p_{M+1} + k}(x_\infty) \in [\sg{v}{M+1}_k] \subseteq [\sg{v}{M}_{I(M,k)}]$. This is justified by noting that $k$ falls within the required range
\begin{equation*}
0\le r\le k\le (M+1)!(|V_M|+C_M)=|V_{M+1}|-1.
\end{equation*}

As $|V_{M}|+C_M$ and $(M+1)!$ are co-prime it is an elementary number theoretic fact that $s(M+1)!$, and hence also $k=s(M+1)!+r$, will run through all the residue classes modulo ${|V_{M}|+C_M}$ as $s$ runs through $\{0, 1, \dots, |V_{M}|+C_M-1\}$. Inspecting the definition of $I(M,k)=I(M,s(M+1)!+r)$, we see that this attains all the values in $\{0, 1, \dots, |V_{M}|-1\}$ when $s$ goes through $\{0, 1, \dots, |V_{M}|+C_M-1\}$. This completes our proof.\qed

\section{\texorpdfstring{$2, 3 \in \per{2^f} \notimplies 6 \in \per{2^f}$}{2, 3 in Per(2\^{}f) does not imply 6 in Per(2\^{}f)}}\label{sec:236}
Using Theorem \ref{tm:atm} we can now construct a map on the Cantor set $\Cs$ for which the induced map on the hyperspace $2^\Cs$ has period 2 and 3 points but no cycle of length 6.

Let $A  \sqcup B = \Cs$ be a partition\footnote{We use symbol $\sqcup$ to denote a disjoint union.} of the Cantor set in two clopen set such that the fixed point $x^0$ is in $A$. Let $\hat{X}=A\times\{0,1\} \sqcup B\times\{0\}$. We define a map $f\colon \hat{X} \to \hat{X}$ by
\begin{equation*}
f(x,i)=
\begin{cases}
(T(x),1), \text{ if } i=0 \text{ and } x\in T^{-1}(A), \\
(T(x),0), \text{ otherwise.}
\end{cases}
\end{equation*}
But this map is not surjective as for example $T(B)\cap A \times \{0\}$ is not contained in the range of $f$, and it is also $2$ to $1$ on $A \cap T^{-1}(B) \times \{0,1\}$ as $f(x,0)=f(x,1)=(T(x),0)$ for all $x\in A\cap T^{-1}(B)$. It is possible to restrict the dynamics to the surjective core of the map $f|_{\tilde{X}}\colon \tilde{X}\to \tilde{X}$ where $\tilde{X}=\bigcap_{k=0}^\infty f^k(\hat{X})$ is the set of all points in $\hat{X}$ that have preimages going infinitely back. This is still not the map we are after as $\tilde{X}$ is the largest closed strongly $f$-invariant set, and we would like our map to be defined on a minimal closed, strongly $f$-invariant set that contains $B\times\{0\}$. Such a set exists by Zorn's lemma, but it turns out that it is unique and can be constructed as follows.

Let
\begin{equation*}
N(x,B)=\min\{ k \in \N_0 \mid T^{-k}(x)\in B \},
\end{equation*}
be the time elapsed since $x$ last visited $B$. If the minimum above does not exist, we set $N(x,B)=\infty$. Clearly $N(x,B)=0$ for all $x\in B$, but it is also finite for all other $x\in\Cs$ which have their backward orbit dense in $\Cs$ which we know is a $G_\delta$ dense set in $\Cs$ with empty interior. In fact, $N(x,B)$ is finite on a much larger (dense and open) set $U=\{x\in\Cs \mid N(x,B)<\infty\}$. To see that $U$ is open it suffices to notice that $N(x,B)$ is a locally constant function. What we mean by this is that for any point $x\in U$ there exists a (cl)open neighbourhood containing $x$ on which the function $N(\;\cdot\;,B)$ is constant ($=N(x,B)$). This is easy to see as one just needs to ensure that the neighbourhood is small enough so that the $N(x,B)$ backward iterates of the points in it follow tightly those of $x$. We now set
\begin{equation*}
X=\overline{\{ (x,N(x,B)\bmod{2}) \mid x\in U \}}
\end{equation*}
where the closure is taken in $\hat{X}$. Note that $X$ is still homeomorphic to the Cantor set since it is defined as the closure of a subset of the Cantor set with no isolated points. It is also immediately clear that $X$ can be written as
\begin{equation*}
X=\overline{\bigcup_{k=0}^\infty f^k(B\times\{0\})}.
\end{equation*}

To prove that this is the unique minimal closed strongly $f$-invariant set containing $B\times\{0\}$ it only remains to be seen that $X\subseteq f(X)$. Actually, $B\times\{0\}\subseteq f(X)$ will suffice. For this we recall the natural factor map, the projection to the first coordinate, $\pi\colon \hat{X} \to \Cs$ given by $\pi(x,i)=x$. It is in fact a semi-conjugacy as one can verify that $\pi\circ f = T \circ \pi$. Using this we get $T(\pi(X))=\pi(f(X))\subseteq \pi(X)$ and so $\pi(X)$ is a $T$-invariant set clearly containing $B$. Remembering that the set of points with their forward orbit dense under $T$ is itself dense in $\Cs$ we conclude that $\pi(X)=\Cs$. But then given any $(b,0)\in B\times\{0\}$ set $b_{-1}=T^{-1}(b)$ and then since $\pi(X)=\Cs$, either $(b_{-1},0)$ or $(b_{-1},0)$ or both are in $X$. In any case one of these will map to $(b,0)$ under $f$ finishing our proof.

Another important thing to note is that the map $\pi$ restricted to $X$ is almost a homeomorphism as it is injective everywhere except possibly at some points of $\pi^{-1}(\Cs\setminus U)\cap X$ where it can be $2$ to $1$. To see that $\pi$ is injective on $\pi^{-1}(U)\cap X$ recall that for any $u\in U$ one can find a clopen neighbourhood $V_u$ where $N(\;\cdot\;,B)$ is constant and hence $V_u\times\{N(u,B)\bmod{2} \}$ is contained in $X$ but $V_u\times\{1- N(u,B)\bmod{2} \}$ has an empty intersection with $X$.

This observation allows us to prove
\begin{lemma}\label{lm:aboutX}
For any fixed natural number $m$, $X$ is the least closed $f^m$-invariant set that contains $B\times\{0\}$.
\end{lemma}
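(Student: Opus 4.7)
The plan is to closely mirror the argument already given for $f$-invariance in the preceding paragraphs, now with $f$ replaced by $f^m$ and the use of almost minimality of $T$ upgraded to almost \emph{total} minimality — this is exactly why the stronger property was arranged in Theorem \ref{tm:atm}. Since $f(X)=X$ has already been established, $X$ is automatically a closed, $f^m$-invariant set containing $B\times\{0\}$, so it remains only to show that any closed $f^m$-invariant set $Y$ with $B\times\{0\}\subseteq Y$ must contain $X$. Using $f^m$-invariance of $Y$, one immediately has $f^{mk}(B\times\{0\})\subseteq Y$ for every $k\geq 0$, so setting $Z:=\overline{\bigcup_{k\geq 0} f^{mk}(B\times\{0\})}$ reduces the task to proving $Z=X$.

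The heart of the argument is a two-step density claim. First, I would project down to $\Cs$ via $\pi$: the image $\bigcup_{k\geq 0} T^{mk}(B)$ has complement $\bigcap_{k\geq 0} T^{mk}(A)$, and any point there has its entire backward $T^m$-orbit in $A$, missing the open set $B$. By Theorem \ref{tm:atm} the full $T^m$-orbit of every non-fixed point is dense in $\Cs$, forcing such a point to be $x^0$; hence the projection contains $\Cs\setminus\{x^0\}$ and is dense. Second, I would lift this density back to $X$: a short induction on $k$ yields $f^k(b,0)=(T^k(b),N(T^k(b),B)\bmod 2)$ for $b\in B$. Given any $z=(c,i)\in X$ with $c\neq x^0$, the point $c$ lies in $U$, so local constancy of $N(\cdot,B)$ provides a clopen neighbourhood $V$ of $c$ on which $N\bmod 2 \equiv i$, which may further be shrunk to avoid $x^0$. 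Every $c_n\in V$ then has the form $T^{mk_n}(b_n)$ with $b_n\in B$, and the induction gives $f^{mk_n}(b_n,0)=(c_n,i)\in Z$ converging to $z$. Because $\pi^{-1}(\{x^0\})\cap X$ consists of at most two points in the Cantor set $X$, the set $\{z\in X:\pi(z)\neq x^0\}$ is open and dense in $X$, and closedness of $Z$ finally forces $Z=X$.

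The main obstacle I foresee is exactly this lifting step: density of the projection in $\Cs$ does not automatically pass to $X$ because $\pi$ fails to be open. The local-constancy property of $N(\cdot,B)$ that the text sets up just before the lemma statement, together with the fact that $\pi|_{X}$ is injective above $U$, is precisely the tool that bridges this gap.
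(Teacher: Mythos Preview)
Your overall strategy is close to the paper's, but there is a genuine gap in the justification of the projection step. You argue that any point in $\bigcap_{k\ge 0}T^{mk}(A)$ has its entire backward $T^m$-orbit inside $A$, and then conclude from Theorem~\ref{tm:atm} that such a point must be $x^0$. But Theorem~\ref{tm:atm} only tells you that the \emph{full} $T^m$-orbit of a non-fixed point is dense; it says nothing about the backward half on its own. A non-fixed point could perfectly well have its backward $T^m$-orbit contained in $A$ (with $\alpha_{T^m}(c)=\{x^0\}$) while the forward orbit alone supplies the density. The same issue recurs when you assert that every $c\neq x^0$ lies in $U$: membership in $U$ means the backward $T$-orbit of $c$ visits $B$, and again full-orbit density does not force this. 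The paper explicitly refrains from claiming $\Cs\setminus U=\{x^0\}$, noting only that $U$ is open and dense.

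The paper's proof sidesteps this by working with an arbitrary closed $f^m$-invariant $S\supseteq B\times\{0\}$ and showing $\pi(S)=\Cs$ via the \emph{forward} direction: since $(\Cs^*,T^m)$ is minimal, Danilenko's result gives a dense set of points with dense forward $T^m$-orbit, one of which lies in the open set $B\subseteq\pi(S)$; its forward orbit then stays in $\pi(S)$ and is dense. Once $\pi(S)=\Cs$, injectivity of $\pi$ over $U$ immediately yields $\{(u,N(u,B)\bmod 2):u\in U\}\subseteq S$, whose closure is $X$. Your induction formula $f^k(b,0)=(T^k(b),N(T^k(b),B)\bmod 2)$ is a correct and pleasant concrete version of this last step, but to make your argument complete you should replace the cofiniteness claims by density statements (density of $\bigcup_{k\ge 0}T^{mk}(B)$ in $\Cs$, and density of $\pi^{-1}(U)\cap X$ in $X$), each of which \emph{does} follow from what is available.
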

\begin{proof}
Let $m\in\N$ be fixed, and let $S\subseteq X$ be a closed, $f^m$-invariant subset of $X$ containing $B\times\{0\}$. We need to see that $S=X$. Reasoning similarly as before, we see that $\pi(S)$ is a closed $T^m$-invariant subset of $\Cs$ containing $B$. Thus, by Theorem \ref{tm:atm}, $\pi(S)=\Cs$. If we now remember that $\pi$ is injective on $\pi^{-1}(U)\cap X = \{ (x,N(x,B)\bmod{2}) \mid x\in U \}$, we get $\{ (x,N(x,B)\bmod{2}) \mid x\in U \}\subseteq S$, and from there immediately $S=X$.
\end{proof}
\begin{remark}
The statement of Lemma \ref{lm:aboutX} could equivalently be written as
\begin{equation*}
X=\overline{\bigcup_{k=0}^\infty f^{km}(B\times\{0\})},
\end{equation*}
for any $m\in\N$.
\end{remark}

\begin{lemma}\label{lm:transitivity}
For any $(x,i)\in X\setminus\pi^{-1}(x^0)$ and any $m\in\N$ we have
\begin{equation*}
\clorbit{(x,i)}{f^m} = X
\end{equation*}
for any chosen full orbit of $(x,i)$.
\end{lemma}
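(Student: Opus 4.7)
The plan is to reduce the problem to Lemma \ref{lm:aboutX} by using the semi-conjugacy $\pi\colon X\to\Cs$ and the almost total minimality of $T$. Fix a full orbit $\bar{(x,i)}=((x,i)_k)_{k\in\Z}\in\inv{X}{f}$ of $(x,i)$ and set $S=\clorbit{(x,i)}{f^m}=\overline{\{(x,i)_{mk}\mid k\in\Z\}}$. I need to show $S=X$. The first key observation is that, because $T$ is a homeomorphism, the projection $\pi\circ\bar{(x,i)}$ is necessarily the unique full $T$-orbit of $\pi(x)$, so $\pi((x,i)_{mk})=T^{mk}(\pi(x))$ for every $k\in\Z$. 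Continuity of $\pi$ on the compact set $X$ makes $\pi(S)$ closed and hence
\begin{equation*}
\pi(S)\supseteq\overline{\{T^{mk}(\pi(x))\mid k\in\Z\}}=\clorbit{\pi(x)}{T^m}.
\end{equation*}

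The hypothesis $(x,i)\notin\pi^{-1}(x^0)$ gives $\pi(x)\ne x^0$, so Theorem \ref{tm:atm} yields $\clorbit{\pi(x)}{T^m}=\Cs$. Thus $\pi(S)=\Cs$. Now I exploit the asymmetric definition of $\hat{X}=A\times\{0,1\}\sqcup B\times\{0\}$: for every $b\in B$ the fibre $\pi^{-1}(b)$ consists of the single point $(b,0)$ (the pair $(b,1)$ was never in $\hat{X}$ at all, let alone in $X$). Combined with $\pi(S)=\Cs\supseteq B$, this forces $B\times\{0\}\subseteq S$.

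Finally, the set $\{(x,i)_{mk}\mid k\in\Z\}$ is strongly $f^m$-invariant by construction, and hence so is its closure $S$ (the image $f^m(S)$ is closed by compactness, contains $f^m(\{(x,i)_{mk}\})=\{(x,i)_{mk}\}$, and is contained in $S$ by continuity). Therefore $S$ is a closed $f^m$-invariant subset of $X$ that contains $B\times\{0\}$, so by Lemma \ref{lm:aboutX} we conclude $S=X$, completing the proof for the chosen full orbit.

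The argument is short and I do not anticipate a serious obstacle: the only subtle point is making sure that when pushing the full orbit through $\pi$ one genuinely obtains the full $T$-orbit of $\pi(x)$ on the nose (which is automatic because $T$ is invertible), and that the non-existence of $B\times\{1\}$ in $\hat{X}$ lets us lift surjectivity on the base back to inclusion of $B\times\{0\}$ in $S$ without any injectivity hypothesis on $\pi$ off the set $U$.
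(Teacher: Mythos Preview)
Your proof is correct and follows essentially the same route as the paper's: project via the semi-conjugacy $\pi$, invoke Theorem~\ref{tm:atm} to get $\pi(S)=\Cs$, lift $B$ back to $B\times\{0\}\subseteq S$ using the singleton fibres over $B$, and then apply Lemma~\ref{lm:aboutX}. Your write-up is in fact slightly more careful than the paper's on two points: you explicitly justify why the pushed-down full orbit coincides with the unique full $T$-orbit of $\pi(x)$ (using that $T$ is a homeomorphism), and you phrase the key fibre condition correctly as $\pi^{-1}(b)\cap X=\{(b,0)\}$ for $b\in B$, whereas the paper's ``$\pi$ is injective on $B\times\{0\}$'' is literally trivial and not quite what is used.
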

\begin{proof}
Using the fact that $\pi$ is a semi-conjugacy and Theorem \ref{tm:atm} we know that
$$\Cs=\clorbit{x}{T^m}=\clorbit{\pi(x,i)}{T^m}\subseteq\pi(\clorbit{(x,i)}{f^m}),$$
and hence $\pi(\clorbit{(x,i)}{f^m})=\Cs$.
But $\pi$ is injective on $B\times\{0\}$ and hence $B\times\{0\}\subset \clorbit{(x,i)}{f^m}$, and as $\clorbit{(x,i)}{f^m}$ is $f^m$-invariant and closed, by Lemma \ref{lm:aboutX} it must be that $\clorbit{(x,i)}{f^m}=X$.
\end{proof}

We are now ready to finish the construction of our example. To avoid confusion we abstain from denoting points in $X$ as ordered pairs, and we let the alternating 2-cycle be made of points $x^0, x^1\in X$. Let $Z=X\times\{0,1,2\}/_\sim$ be a quotient space obtained by gluing the points in $\{(x^0,i) \mid i=0,1,2 \}$ together and likewise the points in $\{(x^1,i) \mid i=0,1,2 \}$. Let us call those two points $z^0$ and $z^1$. We define the map $g\colon Z \to Z$ by
\begin{equation*}
g(x,i)=(f(x),i+1\bmod{3}).
\end{equation*}
Note that this rule respects the quotient relation and therefore accounts for a well-defined map on $Z$. Let us inspect closer the induced map $2^g$. It is clear that $\{z^0\}$ gives a 2-cycle in $2^Z$ as $z^0$ is 2-periodic for $g$. It is also clear that the natural embedding of $X$ in $Z$ as $X\times\{0\}$ produces a 3-cycle in $2^Z$. It remains to show that no 6-cycle in $2^Z$ exists. For if it did, it would have to contain at least one point $(x,i)\in Z$ other than $z^0$ or $z^1$. Then clearly $\clorbit{(x,i)}{g^6}$ would also have to be contained in this set for some choice of the full orbit of $(x,i)$. But $\clorbit{(x,i)}{g^6}=\clorbit{x}{f^6}\times\{i\}=X\times\{i\}$ and therefore our initial set must have been of the form $X\times F/_\sim$ for some $F\subseteq\{0,1,2\}$ and hence was not periodic with fundamental period $6$.

\subsection{General \texorpdfstring{$p$}{p} and \texorpdfstring{$q$}{q}}
Using the idea of the time elapsed since the last visit to $B$ it becomes obvious how to generalise this construction for $p$ and $q$ other than 2 and 3. We set
\begin{equation*}
X=\overline{\{(x,N(x,B)\bmod{p}) \mid x\in U \}},
\end{equation*}
and define a map $f\colon X \to X$ by
\begin{equation*}
f(x,i)=
\begin{cases}
(T(x),i+1\bmod{p}), \text{ if } x\in T^{-1}(A), \\
(T(x),0), \text{ otherwise.}
\end{cases}
\end{equation*}
This is a continuous map on the Cantor set $X$ with a $p$-cycle $(x^0,0)$,$(x^0,1)$, \dots, $(x^0,p-1)$. As before, using the projection map $\pi\colon X\to \Cs$, one checks that $f$ is onto, and that $\pi(X)=\Cs$. Analogously one sees that $\pi$ is injective on $\pi^{-1}(U)\cap X=\{(x,N(x,B)\bmod{p}) \mid x\in U \}$, and hence any closed subset of $X$ that projects onto the whole of $\Cs$ must be $X$ itself. This enables one to prove that Lemmata \ref{lm:aboutX} and \ref{lm:transitivity} still hold even with this, more general, definition of $X$ and $f$. In fact, their statements along with the proofs (up to changing $2$ to $p$) still hold word for word.

One can now set $Z=X \times \{0,1,\dots, q-1\}/_{\sim}$ where $\sim$ identifies $q$ distinct $p$-cycles into one $p$-periodic orbit $\{z^0,\dots, z^{p-1}\}\subset Z$. The map $g\colon Z \to Z$ given by
\begin{equation*}
g(x,i)=(f(x),i+1\bmod{q})
\end{equation*}
is once again well-defined and clearly satisfies $p,q\in \per{2^g}$. We claim that any other $m\in \per{2^g}$ must be a divisor of either $p$ or $q$.

Firstly, if we assume that the periodic point $S\in 2^Z$ is completely contained in $\{z^0,\dots, z^{p-1}\}$ then by our previous results we know that $m|p$. If otherwise there exists a point $(x,i)\in S$ other than any of $z^0,\dots z^{p-1}$ then $\clorbit{(x,i)}{g^{mq}}$ is contained in $S$ for some choice of the full orbit of $(x,i)$. By Lemma \ref{lm:transitivity} we get $\clorbit{(x,i)}{g^{mq}}=\clorbit{(x)}{f^{mq}}\times \{i\}=X\times\{i\}$ and so $S=X\times F/_\sim$ for some $F\subseteq\{0,1,\dots,q-1\}$. From here we immediately get that $m|q$. Thus we have proved
\begin{theorem}\label{tm:pq}
Let $p,q \in \N$. There exists a continuous onto map of the Cantor set $g\colon Z \to Z$ for which the periods appearing in the induced map are exactly divisors of either $p$ or $q$, i.e.\
\begin{equation*}
\per{2^g}=\{m \mid m|p \text{ or } m|q\}.
\end{equation*}
\end{theorem}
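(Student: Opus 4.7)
The plan is to verify that the constructed system $(Z,g)$ realises exactly the divisors of $p$ and $q$ as hyperspace periods. Both inclusions use the machinery already set up: the hard work is the transitivity statement Lemma \ref{lm:transitivity}, whose proof carries over verbatim to general $p$ once the obvious analogue of Lemma \ref{lm:aboutX} is established for $p$ in place of $2$, and I would verify these routine adaptations first.

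For the easy inclusion ``$\supseteq$'', I would note that the points $z^0,\dots,z^{p-1}$ form a $p$-cycle in $(Z,g)$ by construction, so $\{z^0\}\in 2^Z$ has period $p$ under $2^g$; Proposition \ref{prop:eldiv} applied to this elementary orbit then delivers every divisor of $p$. Dually, the clopen slice $X\times\{0\}/_\sim$ is mapped by $2^g$ onto $X\times\{1\}/_\sim$, and these $q$ slices cycle with fundamental period exactly $q$, yielding every divisor of $q$.

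For the reverse inclusion, let $S \in 2^Z$ have fundamental period $m$ under $2^g$, and split into cases. If $S\subseteq\{z^0,\dots,z^{p-1}\}$, then $S$ is a periodic subset of a $p$-cycle, so immediately $m\mid p$. Otherwise $S$ contains some $(x,i)\in Z$ with $x\notin\{x^0,\dots,x^{p-1}\}$. Choosing any full orbit of $(x,i)$, the closed $g^{mq}$-invariant set $\clorbit{(x,i)}{g^{mq}}$ is contained in $S$, and since $g^{mq}$ fixes the second coordinate the product structure gives
\begin{equation*}
\clorbit{(x,i)}{g^{mq}}=\clorbit{x}{f^{mq}}\times\{i\}/_\sim=X\times\{i\}/_\sim,
\end{equation*}
where the final equality is the general-$p$ version of Lemma \ref{lm:transitivity}. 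Hence $X\times\{i\}/_\sim\subseteq S$.

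The main obstacle is converting this inclusion into $m\mid q$. I would iterate $g$: since $f(X)=X$, applying $g^j$ yields $X\times\{(i+j)\bmod q\}/_\sim\subseteq g^j(S)$, so the non-empty set $F=\{\,j\in\Z/q\Z : X\times\{j\}/_\sim\subseteq S\,\}$ satisfies $F+m=F$. Because every slice $X\times\{j\}/_\sim$ already contains all of $\{z^0,\dots,z^{p-1}\}$, once $F\neq\emptyset$ we must actually have $S=\bigcup_{j\in F} X\times\{j\}/_\sim$, so the fundamental period of $S$ under $2^g$ is the least positive $k$ with $F+k=F$ in $\Z/q\Z$, which by assumption equals $m$. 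Since the stabiliser of $F$ is a subgroup of $\Z/q\Z$ containing $m$, its least positive element $m$ must divide $q$, completing the argument.
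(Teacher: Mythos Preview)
Your proposal is correct and follows essentially the same route as the paper: the same case split on whether $S\subseteq\{z^0,\dots,z^{p-1}\}$, the same use of (the general-$p$ analogue of) Lemma~\ref{lm:transitivity} to force an entire slice $X\times\{i\}/_\sim$ into $S$, and the same conclusion that $S$ is a union of slices indexed by some $F\subseteq\{0,\dots,q-1\}$, whence $m\mid q$. You in fact spell out the final $m\mid q$ step via the stabiliser subgroup more carefully than the paper, which simply asserts it.

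One sentence to tighten: when you write ``once $F\neq\emptyset$ we must actually have $S=\bigcup_{j\in F}X\times\{j\}/_\sim$'', the justification ``every slice contains all of $\{z^0,\dots,z^{p-1}\}$'' only handles the $z^k$-points of $S$; for a point $(y,j)\in S$ with $y$ outside the $p$-cycle you need to re-invoke the orbit-closure argument to get $j\in F$. This is clearly what you intend (and the paper is equally terse here), but it is worth saying explicitly.
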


\section{Periodicity in symmetric products}\label{sec:per}

In this section we shall be concerned only with periodic points in $2^X$ made up entirely of periodic points for $(X,f)$. We shall see that in this setting the anomalies such as those in Theorem \ref{tm:pq} are not possible.

Firstly note that one can reduce the problem to studying periodic points of $f^{<\w}$. Namely, let $S\in 2^X$ be a periodic point with fundamental period $k=p_1^{\alpha_1}\cdots p_r^{\alpha_r}$ where each point of $S$ is periodic under $f$. Then as in the proof of Theorem \ref{tm:primepowerdivisors} we can find points $x_1$, \dots, $x_r$ in $S$ for which $p_i^{\alpha_i} \in \per{\bar{x}_i}$ where each $\bar{x}_i$ is simply a full periodic orbit of $x_i$. It could happen that some of these $x_i$'s are represented by the same point or by points that are members of the same full orbit, which in this case means the same cycle. For a moment let us assume that this is not the case and that all of these $r$ full orbits are mutually disjoint when considered as subsets of $X$. Then for each $1\le i \le r$ let $S_i\in 2^X$ be the canonical $k_i=p_i^{\alpha_i}$-periodic set given by $S_i=\{x_i, f^{k_i}(x_i), f^{2 k_i}(x_i), \dots\}$. Note that $S_i$'s are finite and mutually disjoint, and thus $\bigcup_{i=1}^r S_i$ is a $k$-periodic point for $2^f$ but also for $f^{<\w}$.

Suppose now that $x_1$ and $x_2$ belong to the same orbit. Then using Proposition \ref{prop:eldiv} we can actually conclude that $l_1=k_1k_2\in\per{\bar{x}_1}=\per{\bar{x}_2}$. Grouping the elements of the same orbits in this way and discarding all but one representative for each of the orbits we obtain a factorisation of $k=l_1\cdots l_s$ in $s\le r$ co-prime factors where each $l_i$ is an element of some $\per{\bar{x}_{t(i)}}$ with $x_{t(1)}$, \dots, $x_{t(s)}$ all belonging to distinct orbits. After setting $S_i=\{x_{t(i)}, f^{l_i} (x_{t(i)}), f^{2l_i}(x_{t(i)}), \dots \}$ the union $\bigcup_{i=1}^s S_i$ will again be a $k$-periodic point for both $2^f$ and $f^{<\w}$.

\medskip

Below we give explicit formulae for $\per{f^{(n)}}$, $\per{f_{n}}$, and $\per{f^{<\w}}$ in terms of $\per{f}$ where by $\per{f}$ we denote the \emph{set of periods} of the periodic points of $f$.

\begin{proposition}
\label{prop:periods}
We have the following identities
\begin{gather}
\per{f^{(n)}} =\left\{ [m_1,\dots,m_n] \mid m_i\in \per{f} \textnormal{ for all }
1\le i\le n \right\},\label{eq:p1}\\
\per{f_{n}} = \bigcup_{l=1}^n\left\{ [d_1,\dots,d_l] \mid d_i|m_i\in \per{f}
\textnormal{ for all } 1\le i\le l, \phantom{\frac{m_l}{d_l}}\right.\label{eq:p2}\\
\left.\hspace{15em}\textnormal{ and } \frac{m_1}{d_1}+\dots+\frac{m_l}{d_l}\le n\right\},\nonumber\\
\per{f^{< \w}} =\bigcup_{l=1}^\infty\left\{ [d_1,\dots,d_l] \mid d_i|m_i\in \per{f}
\textnormal{ for all } 1\le i\le l \right\},\label{eq:p3}
\end{gather}
where $[k_1,\dots,k_m]$ stands for the least common multiple of $k_1,\dots,k_m$.
\end{proposition}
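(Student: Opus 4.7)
The plan is to establish the three formulae one at a time; (5.1) is essentially immediate, while (5.2) and (5.3) share a common orbit-decomposition strategy.

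For (5.1), I would simply observe that $(f^{(n)})^k(x_1,\dots,x_n) = (x_1,\dots,x_n)$ is equivalent to $f^k(x_i) = x_i$ for every $i$, so a tuple is periodic iff every coordinate $x_i$ is $f$-periodic of some fundamental period $m_i \in \per{f}$, and its fundamental period is exactly $[m_1,\dots,m_n]$. Conversely, any lcm of elements of $\per{f}$ is realised by independently choosing a periodic point of period $m_i$ in each coordinate.

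For the forward inclusion in (5.2) and (5.3), I would decompose a finite periodic set along its $f$-orbits. Let $S$ be periodic under the induced map with period $k$. Since $S$ is finite and $f^k(S)=S$, the map $f^k$ acts bijectively on $S$, so $(f^k)^{|S|!}$ is the identity on $S$; in particular every point of $S$ is $f$-periodic. Partition $S = S_1 \sqcup \cdots \sqcup S_l$ according to the distinct $f$-orbits $O_1,\dots,O_l$ that $S$ meets, and set $m_j := |O_j| \in \per{f}$. Since $O_j$ is $f$-invariant, $f^k(S_j) = f^k(S) \cap O_j = S_j$, so each $S_j$ is itself periodic. Identifying $O_j$ with $\Z/m_j\Z$ and $f$ with translation by $+1$, the stabiliser $H_j = \{h \in \Z/m_j\Z : S_j + h = S_j\}$ is a cyclic subgroup $\langle d_j \rangle$ in which $d_j \mid m_j$ is the fundamental period of $S_j$; since $S_j$ is a union of $H_j$-cosets, $|S_j| \ge |H_j| = m_j/d_j$. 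It follows that $k = [d_1,\dots,d_l]$ and $\sum_j m_j/d_j \le |S|$, with the latter bounded by $n$ in the case of (5.2).

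For the reverse inclusion I would take a tuple on the right-hand side and construct a realising set; the subtle point is that the formula allows repeated $m_i$'s, while the decomposition above produces distinct orbits. I would first consolidate: whenever two indices share $m_i = m_j =: m$, replace them by a single index with value $[d_i, d_j]$; since $d_i, d_j \mid m$ implies $[d_i, d_j] \mid m$, the divisibility hypothesis survives, the overall lcm $[d_1,\dots,d_l]$ is preserved, and $m/[d_i, d_j] \le m/d_i \le m/d_i + m/d_j$ shows the size sum only decreases. After consolidation the $m_i$'s are pairwise distinct, so one can pick pairwise distinct $f$-orbits $O_i \cong \Z/m_i\Z$ of periods $m_i \in \per{f}$; within each $O_i$ the subgroup $T_i := \langle d_i \rangle$ has size $m_i/d_i$ and fundamental $f$-period exactly $d_i$. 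Then $S := \bigsqcup_i T_i$ has fundamental period $[d_1,\dots,d_l]$ and $|S| = \sum_i m_i/d_i \le n$; dropping this size constraint throughout yields (5.3). The one step I expect to require real care is the consolidation argument; the rest reduces to routine bookkeeping on subgroups and cosets in finite cyclic groups.
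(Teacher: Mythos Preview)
Your argument is correct and follows the same orbit-decomposition strategy as the paper: partition a periodic finite set by $f$-cycles, read off the $d_i\mid m_i$ and the size bound $\sum m_i/d_i\le n$, and realise the reverse inclusion with equispaced subsets of chosen cycles. Your stabiliser/coset phrasing for the bound $|S_j|\ge m_j/d_j$ is in fact cleaner than the paper's permutation-order argument, and your consolidation step (merging indices with equal $m_i$ so that the chosen orbits are genuinely distinct) patches a point the paper glosses over when it simply ``takes points $r_1,\dots,r_l$ of periods $m_1,\dots,m_l$'' without ensuring they lie on different cycles.
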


\begin{proof}
Statement \eqref{eq:p1} is easy. Given points $q_1,\dots, q_n$ with fundamental periods
$m_1, \dots, m_n$ it is clear that the point $(q_1,\dots,q_n)\in X^{(n)}$
has fundamental period $[m_1,\dots,m_n]$. And conversely any periodic point
of $f^{(n)}$ must arise in this fashion.

We shall now prove \eqref{eq:p2}. Let $Q=\{q_1,\dots, q_n\}$ be a periodic point in
$F_n(X)$. Clearly each $q_i$ must be a periodic point of $f$. Then $Q$
can be naturally partitioned into sets $Q_1, \dots Q_l$ where each $Q_i$
contains points belonging to the same cycle under $f$, and no two
points from distinct $Q_i's$ belong to the same cycle. All the points in
$Q_i$ have the same period $m_i$ under $f$. Let us denote the
periods of $Q,Q_1,\dots,Q_l\in F_n(X)$ by $d,d_1,\dots,d_l$
respectively. Clearly $d=[d_1,\dots,d_l]$ and since $f_n^{m_i}(Q_i)=Q_i$
we also have $d_i | m_i$. By the construction we have $l\le n$ as every $Q_i$
must contain at least one point from $Q$.

Lastly, we claim that $\frac{m_i}{d_i}\le |Q_i|$ where $|\, . \,|$ denotes
the cardinality of a set. This is because $f_n^{d_i}$ acts on $Q_i$ as
a permutation over set of cardinality $|Q_i|$ of order at most $\frac{m_i}{d_i}$.
But a permutation can not have order larger than the cardinality of the set it
acts upon and, thus, we get the claim. From this it follows
$\sum_{i=1}^{l}\frac{m_i}{d_i}\le |\bigcup_{i=1}^{l} Q_i|=|Q|=n$ which
finishes the proof of the inclusion $\subseteq$ in \eqref{eq:p2}. The other
inclusion follows by noting that any period of the form $[d_1,\dots,d_l]$
where $1\le l\le n$, $d_i | m_i$ for some $m_i\in \per{f}$, and
$\sum_{i=1}^{l}\frac{m_i}{d_i}\le n$ can be realised by taking points
$r_1,\dots,r_l$ of periods $m_1, \dots, m_l$ respectively and then
setting $Q=\bigcup_{i=1}^l Q_i$ where for each $1\le i \le l$  the set
$Q_i$ is formed of $\frac{m_i}{d_i}$ equispaced iterates of $r_i$.
More precisely $Q_i=\{r_i,f^{d_i}(r_i),f^{2d_i}(r_i),\dots,f^{(\frac{m_i}{d_i}-1)d_i}(r_i)\}$.

Finally, note that $\per{f^{<\w}}=\bigcup_{n\in\N} \per{f_n}$, therefore \eqref{eq:p3}
follows immediately from \eqref{eq:p2}.
\end{proof}

\begin{remark}
An important observation that implicitly drives the proof of the second statement above was that no periods are lost if one only considers points in $F_n(X)$ formed of equispaced\footnote{Formally, we say that $Q=\{q_1,\dots,q_m\}$ is an equispaced point in $F_n(X)$ if $\min\{k \ge 1 \mid f^k(q_i)\in Q\}$ does not depend on $1\le i\le m$.} iterates of points in $X$. To take a simple example, consider a point $x$ which is periodic with fundamental period $6$ in some system $(X,f)$. Even though $\{x,f(x)\}$, $\{x,f(x), f^2(x)\}$, and many others are period $6$ points for $f_3$, there is a simple, equispaced point of period $6$ for $f_3$, namely $\{x\}$.
\end{remark}
\begin{remark}\label{rem:finiteperiods}
Note that statement \eqref{eq:p3} could equally be written as
\begin{equation*}
\per{f^{<\w}}=\left[\mathcal{D}\left(\per{f}\right)\right]
\end{equation*}
where $\mathcal{D}(S)=\{d\in\N \mid d|n \text{ for some } n\in S \}$ is the set of divisors of $S$, and $[S]=\{[k_1,\dots,k_l] \mid l\in\N, k_1, \dots k_l \in S \}$ is the set of least common multiples of a set $S\subseteq \N$. In particular, $\per{f^{<\w}}$ is the smallest set containing $\per{f}$ that is closed under taking least common multiples and divisors.
\end{remark}

Let us show how Proposition \ref{prop:periods} can be used to compute the periods of induced maps on hyperspaces.
\begin{example}
Let $f\colon[0,1]\to[0,1]$ be a $2^r$ interval map ($r>1$), i.e.\ a map for which $\per{f}=\{1,2,2^2,\dots 2^{r-1}, 2^r \}$. One possible construction of such a map is described in \cite{BlockCoppel}*{Example I.13}. For any positive integer $n$, all the sets $\per{f_n}$, $\per{f^{(n)}}$, $\per{f^{<\w}}$ clearly contain $\per{f}$. Note that in our special case, if $d_i|m_i$ and $m_i\in \per{f}$ then $d_i\in \per{f}$, and hence also $[d_1,\dots,d_l]=\max\{d_1,\dots,d_l\}\in \per{f}$. Therefore $\per{f_n}=\per{f^{(n)}}=\per{f^{<\w}}=\per{f}$. Note that using Theorem \ref{tm:interval} and the fact $\per{f}\subseteq\per{2^f}$ we can also conclude $\per{2^f}=\N$.
\end{example}

\section{Concluding remarks}\label{sec:concluding}

Nearly all the results contained in here were driven by the problem of finding a complete characterisation of the set $\per{2^f}$ in terms of $\per{f}$. The first step would be to find constraints on $\per{2^f}$ and this already seems involved. We have shown that this set must be closed under taking prime divisors, but the question remains if it is closed under taking divisors in general. Whatever turns out to be the truth, one will still be faced with the laborious task of constructing maps attaining all the admissible periodicity sets in-between, if one seeks for such a full characterisation. Some of these will be easy, but we have already gone through much trouble in order to construct examples having two co-prime periods in $\per{2^f}$ but not their product. We conjecture that, more generally, a map with $\per{2^f}=\{m \mid m|p \text{ for } p\in P \}$ exists for any finite $P\subset \N$.

One plausible strategy for obtaining such results easily would be to find a way to embed any Cantor set dynamics $(X,f)$ within another (``bigger'') Cantor set dynamics $(\hat{X},\hat{f})$ where $X$ would be a closed, nowhere dense set in $\hat{X}$. Additionally one would require that $(\hat{X}\setminus X,\hat{f})$ is totally minimal. More precisely, we formulate this as
\begin{question}
Let $(X,f)$ be a dynamical system on the Cantor set $X$. Does there always exist a system on the Cantor set $(\hat{X},\hat{f})$ and an embedding (continuous injection) $i\colon X \to \hat{X}$ which:
\begin{enumerate}[label=(\alph*)]
\item intertwines the dynamics of $f$ and $\hat{f}$, i.e.\ $\hat{f}\circ i = i \circ f$,
\item embeds $X$ as an $\hat{f}$-invariant set whose complement in $\hat{X}$ is also invariant and non-empty,
\item and such that for any $m\in\N$ and any full orbit $\bar{x}\in\inv{\hat{X}}{\hat{f}}$ of a point $x=x_0$ in $\hat{X}\setminus i(X)$ we have $\clorbit{\bar{x}}{\hat{f}^m}=\hat{X}$?
\end{enumerate}
Note that these imply that the embedding $i(X)$ is nowhere dense in $\hat{X}$.
\end{question}

Using this one could, starting with our $p$, $q$ example as $(X,f)$, construct a system with $\per{2^f}$ consisting solely of  divisors of $p$, $q$ and $r$ by taking the set $\hat{X}\times \{0,1,\dots,r-1\}/_\sim$, where $\sim$ identifies $r$ copies of $X$ into just one copy, and the map $(x,i)\mapsto (\hat{f}(x),i+1\bmod{r})$. This procedure can be then repeated with the system just obtained as a starting point. Continuing in this way would enable one to construct systems with an increasing hierarchy of subsystems nested within each other. It is not hard to check that $\per{2^f}$ of such a map would be exactly of the form above.

\bibliographystyle{plain}
\bibliography{refs}

\appendix

\section{Bratteli-Vershik representation}

We have mentioned before that it would be possible to give rewrite rules for translating a description of a systems given by graph covers to those using Bratteli-Vershik diagrams or Kakutani-Rokhlin towers. In this section we give an example of how this is done by providing another, equivalent construction of the system $(\Cs, T)$ from Theorem \ref{tm:atm} using the former.

\medskip

There are many references explaining the theory behind Bratteli-Vershik diagrams, e.g.\ \cites{HPS, MarkovOdometers}, but to us most relevant was a recent preprint \cite{Category} by Amini, Elliott, and Golestani where they give a category theory treatment of the matter highlighting the connection between these diagrams and essentially minimal systems.

\medskip

Consider the infinite graph in Figure \ref{fig:bv}, where all the edges are oriented downwards even if this is not depicted there. Additionally, for each node other than the root $L_0$ which has no incoming edges, an ordering is given on the set of incoming edges into that node. This order is depicted in the figure as the order in which the edges connect to that node going from left to right.

We have yet to explain how this infinite graph is constructed. Recall that in the proof of Theorem \ref{tm:atm} we obtained a sequence of numbers $(C_i)_{i\in\N_0}$ which was of some significance in the construction of the map $T$. Here, any node $L_{i+1}$ will have only one incoming edge coming from the node $L_{i}$ for any $i\in\N_0$. The incoming edges into the node $R_{i+1}$ will be: $C_{i+1}$ edges coming form the node $L_{i}$, then one edge from the node $R_i$, then another $C_{i+1}$ edges form $L_{i}$, and another one from $R_i$, and this sequence of $C_{i+1}+1$ edges is to be repeated in total $(i+1)!$ times, after which the last one in comes the edge from $L_i$. Note that we could simply say that there are $(i+1)!\cdot(C_{i+1})+1$ edges coming from $L_{i}$ and $(i+1)!$ from $R_{i}$ into $R_{i+1}$, but by listing them as above, we implicitly specified the order on those edges incoming into the node $R_{i+1}$ for any $i\in\N$.

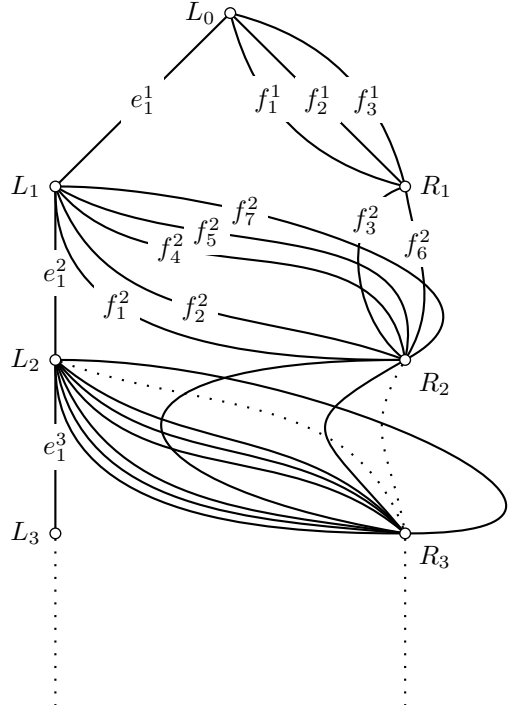
\begin{figure}
\begin{center}
\GraphInit[vstyle=Welsh]
\tikzset{VertexStyle/.append style = {shape = circle,fill = white,inner sep = 0pt,outer sep = 0pt,minimum size = 4pt}}
\usetikzlibrary{positioning}
\begin{tikzpicture}[trim left=(1), trim right = (2)]
\begin{scope}
\SetGraphUnit{2.3}
	\Vertex[L=$L_0$, Lpos=180]{0}
	\SOWE[L=$L_1$, Lpos=180](0){1}
	\SOEA[L=$R_1$](0){2}
	\SO[L=$L_2$, Lpos=180](1){3}
	\SO[L=$R_2$,Lpos=-40](2){4}
	\SO[L=$L_3$, Lpos=180](3){5}
	\SO[L=$R_3$,Lpos=-40](4){6}
	\SO[NoLabel,empty=true](5){7}
	\SO[NoLabel,empty=true](6){8}
		
	\Edge[label=$e^1_1$](0)(1)
	\Edge[label=$e^2_1$](1)(3)
	\Edge[label=$e^3_1$](3)(5)
	\Edge[style={loosely dotted}](6)(8)
	\Edge[style={loosely dotted}](5)(7)
	
	\Edge[label=$f^1_1$,style = {bend right=30,pos=.35}](0)(2)
	\Edge[label=$f^1_2$,style = {bend right=0,pos=.5}](0)(2)
	\Edge[label=$f^1_3$,style = {bend right=-30,pos=.65}](0)(2)
	
	\Edge[label=$f^2_1$,style = {out=-90, in=180,pos=.35}](1)(4)
	\Edge[label=$f^2_2$,style = {out=-70, in=160}](1)(4)
	\Edge[label=$f^2_3$,style = {out=-160, in=140,pos=.26}](2)(4)
	\Edge[label=$f^2_4$,style = {out=-50, in=100,pos=.3}](1)(4)
	\Edge[label=$f^2_5$,style = {out=-30, in=80,pos=.32}](1)(4)
	\Edge[label=$f^2_6$,style = {out=-80, in=60,pos=.3}](2)(4)
	\Edge[label=$f^2_7$,style = {out=0, in=30, distance=60,pos=.32}](1)(4)

	\Edge[style = {out=-90, in=180}](3)(6)
	\Edge[style = {out=-80, in=175}](3)(6)
	\Edge[style = {out=-70, in=172}](3)(6)
	\Edge[style = {out=-180, in=170, distance=120}](4)(6)
	
	\Edge[style = {out=-60, in=140}](3)(6)
	\Edge[style = {out=-50, in=135}](3)(6)
	\Edge[style = {out=-40, in=132}](3)(6)
	\Edge[style = {out=-150, in=130, distance = 50}](4)(6)
	
	\Edge[style = {out=-20, in=110, loosely dotted}](3)(6)
	\Edge[style = {out=-120, in=100, distance = 30, loosely dotted}](4)(6)
	
	\Edge[style = {out=0, in=0, distance=100}](3)(6)
\end{scope}
\end{tikzpicture}
\end{center}
\caption{Bratteli-Vershik representation of the system from Theorem \ref{tm:atm}.}
\label{fig:bv}
\end{figure}

The space we shall be considering is the space of all infinite paths starting from $L_0$ and then following the sequence of edges down through the vertices $L_i$/$R_i$. Encoding these paths as sequences of edges allows one to see this space as the Cantor set of symbolic sequences. It remains to specify the map on this space of infinite paths.

Recall that for each node other than the root there exists the minimal and the maximal ingoing edge into that node. A path is said to be minimal (resp.\ maximal) if it consists entirely of minimal (resp.\ maximal) edges. It is not hard to check that in our graph the path $e^1_1e^2_1e^3_1\dots$ that is always staying on the left side is both minimal and maximal and no other minimal nor maximal path exits. This path will be the fixed point of our map. For any other path $p^1p^2p^3\dots$ there must exist a $k\in\N_0$ such that the edge $p^k$ is not maximal. Choose this $k$ to be the smallest possible and then we let our function map this path to the path $q^1q^2\dots q^kp^{k+1}p^{k+2}p^{k+3}\dots$ where $q^k$ is the successor of the edge $p^k$ and $q^i$'s for $i<k$ are chosen in such a way that $q^1q^2\dots q^{k-1}$ forms the (unique) minimal path connecting $L_0$ and the source vertex of the edge $p^{k+1}$. Equivalently, $q^1\dots q^k$ is chosen to be the successor of the path $p^1\dots p^k$  with respect to the natural induced order on the sequence of paths (of length $k$) terminating at the source of $p^{k+1}$.

We leave to the reader to check that this does give the same (or formally, conjugated) system as the method given in Section \ref{sec:atm}. For this it is helpful to see how this representation was derived from Figure \ref{fig:graphs}.

Each vertex above stands for a cycle in the graph covers representation. To be precise, the self-loop of the graph $G_0$ is represented by the root $L_0$, the self-loop of any $G_i$ is represented by the vertex $L_i$, and the bigger cycle $\sg{v}{i}_0\sg{v}{i}_1\dots\sg{v}{i}_{|V_i|}$ in $G_i$ corresponds to the right side vertex $R_i$. Finally the edges and the order in which they connect vertices $L_i$ and $R_i$ to the vertices $L_{i+1}$ and $R_{i+1}$ are to be inferred from the way the cycles in the graph $G_{i+1}$ wrap around through the cycles of $G_i$. In particular, the loop $e_0'$ in $G_2$ just goes once through the loop $e_0$ in $G_1$ and hence only one edge from $L_1$ to $L_2$. On the other hand, the cycle $f_1'f_2'\dots f_{11}'$ in $G_2$ wraps firstly two times around $e_0$, then once around the bigger cycle in $G_1$, then repeats this, and then finally winds the last time over $e_0$. This means that the edges incoming into $R_2$ are: two edges from $L_1$, followed by one from $R_1$, then again two from $L_1$ and one from $R_1$, and lastly one from $L_1$.

\medskip

At the very end we mention that the corresponding representation using Ka\-ku\-ta\-ni-Rokhlin towers is obtained by associating the cycles in the graphs $G_i$ with the eponymous towers and the vertices within those cycles correspond to the levels of the towers.
\end{document}